\begin{document}
\def \Z{\mathbb Z}
\def \C{\mathbb C}
\def \R{\mathbb R}
\def \Q{\mathbb Q}
\def \N{\mathbb N}

\def \A{{\mathcal{A}}}
\def \D{{\mathcal{D}}}
\def \E{{\mathcal{E}}}
\def \E{{\mathcal{E}}}
\def \H{\mathcal{H}}
\def \S{{\mathcal{S}}}
\def \ZA{{\mathcal{Z}}}

\def \wt{{\rm wt}}
\def \tr{{\rm tr}}
\def \span{{\rm span}}
\def \Res{{\rm Res}}
\def \Der{{\rm Der}}
\def \End{{\rm End}}
\def \Ind {{\rm Ind}}
\def \Irr {{\rm Irr}}
\def \Aut{{\rm Aut}}
\def \GL{{\rm GL}}
\def \Hom{{\rm Hom}}
\def \mod{{\rm mod}}
\def \ann{{\rm Ann}}
\def \ad{{\rm ad}}
\def \rank{{\rm rank}\;}
\def \<{\langle}
\def \>{\rangle}

\def \g{{\frak{g}}}
\def \h{{\hbar}}
\def \k{{\frak{k}}}
\def \sl{{\frak{sl}}}
\def \gl{{\frak{gl}}}

\def \be{\begin{equation}\label}
\def \ee{\end{equation}}
\def \bex{\begin{example}\label}
\def \eex{\end{example}}
\def \bl{\begin{lem}\label}
\def \el{\end{lem}}
\def \bt{\begin{thm}\label}
\def \et{\end{thm}}
\def \bp{\begin{prop}\label}
\def \ep{\end{prop}}
\def \br{\begin{rem}\label}
\def \er{\end{rem}}
\def \bc{\begin{coro}\label}
\def \ec{\end{coro}}
\def \bd{\begin{de}\label}
\def \ed{\end{de}}

\newcommand{\m}{\bf m}
\newcommand{\n}{\bf n}
\newcommand{\nno}{\nonumber}
\newcommand{\nord}{\mbox{\scriptsize ${\circ\atop\circ}$}}
\newtheorem{thm}{Theorem}[section]
\newtheorem{prop}[thm]{Proposition}
\newtheorem{coro}[thm]{Corollary}
\newtheorem{conj}[thm]{Conjecture}
\newtheorem{example}[thm]{Example}
\newtheorem{lem}[thm]{Lemma}
\newtheorem{rem}[thm]{Remark}
\newtheorem{de}[thm]{Definition}
\newtheorem{hy}[thm]{Hypothesis}
\makeatletter \@addtoreset{equation}{section}
\def\theequation{\thesection.\arabic{equation}}
\makeatother \makeatletter

\begin{center}
{\Large \bf   $q$-Virasoro algebra  and affine Kac-Moody Lie algebras}
\end{center}
\begin{center}
{Hongyan Guo$^{a,c}$, Haisheng Li$^{a,b}$\footnote{Partially supported by
 China NSF grants (Nos.11471268, 11571391)},
Shaobin Tan$^{a}$\footnote{Partially supported by China NSF grants (Nos.11471268, 11531004)}
and Qing Wang$^{a}$\footnote{Partially supported by
 China NSF grants (Nos.11531004, 11622107), Natural Science Foundation of Fujian Province
(No. 2016J06002)}\\
$\mbox{}^{a}$School of Mathematical Sciences\\
 Xiamen University, Xiamen 361005, China\\
$\mbox{}^{b}$Department of Mathematical Sciences\\
Rutgers University, Camden, NJ 08102, USA\\
$\mbox{}^{c}$School of Mathematical and Statistical Sciences\\
University of Alberta, Edmonton T6G 2G1, Canada}
\end{center}

\begin{abstract}
We establish a natural connection of the $q$-Virasoro algebra $D_{q}$ introduced by Belov and Chaltikian
with affine Kac-Moody Lie algebras. More specifically,
for each abelian group $S$ together with a one-to-one linear character $\chi$,
we define an infinite-dimensional Lie algebra $D_{S}$ which reduces to $D_{q}$ when $S=\Z$.
Guided by the theory of equivariant quasi modules for vertex algebras,
 we introduce another Lie algebra $\g_{S}$ with $S$ as an automorphism group and
we prove that $D_{S}$ is isomorphic to the $S$-covariant algebra of the affine Lie algebra $\widehat{\g_{S}}$.
We then relate restricted $D_{S}$-modules of level $\ell\in \C$ to equivariant quasi modules
for the vertex algebra $V_{\widehat{\g_{S}}}(\ell,0)$ associated to $\widehat{\g_{S}}$ with level $\ell$.
Furthermore, we show that if $S$ is a finite abelian group of order $2l+1$, $D_{S}$ is
isomorphic to the affine Kac-Moody algebra of type $B^{(1)}_{l}$.
\end{abstract}

\section{Introduction}
In an attempt to find a meaningful $q$-analog of the Virasoro algebra, a certain infinite-dimensional Lie algebra  $D_{q}$ over the field $\C$ of complex numbers was introduced  and studied in \cite{BC} (cf. \cite{N}), where $q$ is a nonzero complex number.
In case that $q$ is not a root of unity,  Lie algebra $D_{q}$
was studied in \cite{gltw} by using vertex algebra theory in an essential way.
 It was proved therein that
the category of (suitably defined) restricted $D_{q}$-modules of a fixed level $\ell\in \C$ is canonically isomorphic
to the category of equivariant quasi modules for the vertex algebra $V_{\hat{\D}}(\ell,0)$
associated to the affine Lie algebra $\hat{\D}$  with level $\ell$ of a certain (infinite-dimensional) Lie algebra $\D$
with a symmetric invariant bilinear form.

In the current paper, we continue this study to deal with the case  where $q$ is a root of unity.
Actually, we work with a slightly more general setting.
Let $S$ be an abelian group with a one-to-one linear character
$\chi$. (This implies that if $S$ is finite, it must be cyclic.)
Then we define an infinite-dimensional Lie algebra $D_{S}$
with generators $D^{\alpha}(n)$ for $\alpha\in S,\  n\in \Z$, subject to certain relations.
In case $S=\Z$ with $q=\chi(1)$, $D_{S}$ reduces to $D_{q}$.
Guided by the theory of equivariant quasi modules for vertex algebras (see \cite{Li1, li-gamma-twisted}),
we introduce another Lie algebra $\g_{S}$ with $S$ as an automorphism group and
we prove that $D_{S}$ is isomorphic to the $S$-covariant algebra of the affine Lie algebra $\widehat{\g_{S}}$.
We then relate restricted $D_{S}$-modules of level $\ell\in \C$ to equivariant quasi modules
for the vertex algebra $V_{\widehat{\g_{S}}}(\ell,0)$ associated to $\widehat{\g_{S}}$ with level $\ell$.
Furthermore, we show that if $S$ is a finite abelian group of order $2l+1$, $D_{S}$ is
actually isomorphic to the affine Kac-Moody algebra of type $B^{(1)}_{l}$.

Now, we give a more detailed account of the contents of this paper.
Let $S$ be an abelian group with a one-to-one linear character
$\chi: S\rightarrow \C^{\times}$. We first define an infinite-dimensional Lie algebra $D_{S}$
with generators $D^{\alpha}(n)$ for $\alpha\in S,\  n\in \Z$, subject to certain relations
 (see Definition \ref{def-DS}).
Following the common practice in vertex algebra theory, we form a generating function
$D^{\alpha}(x)= \sum_{n\in\Z}D^{\alpha}(n)x^{-n-1}$
for each $\alpha\in S$ and then calculate the commutator of two general generating functions.
The commutator relations for the generating functions and  theory of equivariant quasi modules for vertex algebras  indicate that $D_{S}$ is closely related to the affine Lie algebra of some (possibly infinite-dimensional) Lie algebra
with a symmetric invariant bilinear form.
Motivated by this,  for the abelian group $S$  we introduce another Lie algebra
$\g_{S}$ with generators $d^{\alpha,\beta}$ for $\alpha,\beta\in S$, subject to relations
$d^{-\alpha,\beta}=-d^{\alpha,\beta}$ and
\begin{eqnarray*}
&& [d^{\alpha,\beta},d^{\mu,\nu}] =
    \delta_{\alpha+\mu,\nu-\beta}d^{\alpha+\mu,-\alpha+\nu}
   -\delta_{\alpha+\mu,\beta -\nu}d^{\alpha+\mu,\alpha+\nu}
                         \nonumber\\
  &&\hspace{2.2cm}
   -\delta_{\alpha-\mu,\nu-\beta}d^{\alpha-\mu,\nu -\alpha}
   +\delta_{\alpha-\mu,\beta-\nu}d^{\alpha-\mu,\alpha+\nu}
\end{eqnarray*}
for $\alpha,\beta,\mu,\nu\in S$. It is proved that $S$ acts on $\g_{S}$ as an automorphism group with
$$\gamma\cdot d^{\alpha,\beta}=d^{\alpha,\beta+\gamma}\   \   \    \mbox{ for }\alpha,\beta,\gamma\in S.$$
We also equip $\g_{S}$ with a symmetric invariant bilinear form.
Then we prove that $D_{S}$ is isomorphic to the so-called $S$-covariant algebra of
the affine Lie algebra $\widehat{\g_S}$ as we explain next.

It is a classical fact that for any Lie algebra $K$ with an automorphism group $G$,
all the $G$-fixed points in $K$ form a Lie subalgebra $K^{G}$.  An important result in Kac-Moody algebra theory
 (see \cite{kac}) is that  twisted affine Kac-Moody algebras are isomorphic to
the fixed-point subalgebras of untwisted affine Kac-Moody algebras with respect to Dynkin diagram automorphisms.
Now, assume that  $G$ is a (possibly infinite) automorphism group of $K$ such that for any $a,b\in K$,
$$[ga,b]=0\    \    \mbox{ for all but finitely many }g\in G.$$
Under this condition, one can associate another Lie algebra to the pair $(K,G)$ as follows (see  \cite{G-K-K}, \cite{li-gamma-twisted}).
 Define a new multiplicative operation $[\cdot,\cdot]_{G}$  on $K$ by
 $$[a,b]_{G}=\sum_{g\in G}[ga,b].$$
 Set
 $$I_{G}={\rm span}\{ a-ga\ |\ a\in K,\ g\in G\}.$$
 Then $I_{G}$ is a two-sided ideal of the non-associative algebra $(K,[\cdot,\cdot]_{G})$ and the quotient algebra
 by $I_{G}$ is a Lie algebra, which is denoted by $K/G$ and called the {\em $G$-covariant algebra} of $K$.
 In case that $G$ is finite, one can show that Lie algebras $K^{G}$ and $K/G$ are isomorphic.

 Now that $D_{S}$ is isomorphic to the $S$-covariant algebra of the affine Lie algebra $\widehat{\g_S}$,
by applying a result of \cite{li-gamma-twisted} we show that
the category of restricted $D_{S}$-modules of a fixed level $\ell$ is naturally isomorphic to the category of equivariant quasi $V_{\hat{\g}_{S}}(\ell,0)$-modules. This generalizes the corresponding result of  \cite{gltw}.
Then  we continue to determine the Lie algebra $\g_S$ for $S$ a finite abelian group.
Assuming $|S|=2l+1$, we prove that $\g_S$ is a simple Lie algebra of type $B_{l}$.
It follows that $D_S$ is isomorphic to the $S$-covariant algebra of the affine Kac-Moody algebra
of type $B_{l}^{(1)}$. Using this and a result of Kac (see \cite{kac}) we show that $D_{S}$ is actually isomorphic to
the affine Kac-Moody algebra of type $B_{l}^{(1)}$.
For the case with $S$ of an even order, set $S^{0}=\{ \alpha\in S\ |\ 2\alpha=0\}$ and
let $I$ be the subspace of $\g_{S}$, linearly  spanned by the following vectors
$$d^{\alpha+\gamma,\beta+\gamma}-d^{\alpha,\beta}\  \  (\alpha,\beta\in S,\ \gamma \in S^0).$$
We prove that $I$ is an ideal of $\g_{S}$ and the quotient algebra $\g_S/I$ is
a direct sum of simple Lie algebras of type $B$ or a direct sum of simple Lie algebras of type
$D$ (see Proposition \ref{general-case}).

This paper is organized as follows: In Section 2, we introduce Lie algebras $D_{S}$ and $\g_S$, and prove that
$D_{S}$ is isomorphic to the $S$-covariant algebra of $\widehat{\g_S}$.
 In Section 3, we identify $\g_S$ as a subalgebra
of the general linear algebra $\gl_{S}$ and prove that  if $|S|=2l+1$, then $\g_S$ is isomorphic to $B_{l}$
and $D_{S}$ is isomorphic to the affine Kac-Moody algebra of type $B_{l}^{(1)}$.

\section{Lie algebras $D_{S}$, $\g_S$, and  equivariant quasi modules for $\Gamma$-vertex algebras}

In this section, to any abelian group $S$ with a linear character $\chi$ we associate two Lie algebras $D_S$ and $\g_S$.
Furthermore, we equip Lie algebra $\g_{S}$ with a symmetric invariant bilinear form
and show that the vertex algebra $V_{\widehat{\g_{S}}}(\ell,0)$ associated to the affine Lie algebra
$\widehat{\g_{S}}$ of level $\ell$ is a $\Gamma$-vertex algebra with $\Gamma=S$.
Then we establish an isomorphism between the category of restricted $D_{S}$-modules of level $\ell$
and that of equivariant quasi $V_{\widehat{\g_{S}}}(\ell,0)$-modules.

First of all, we fix some basic notations. Throughout this paper, 
 $\C^{\times}$ denotes the multiplicative group of nonzero complex numbers
 (with $\C$ denoting the complex number field), and
the symbols $x,y,x_{1},x_{2},\dots $ denote mutually commuting independent formal variables. All vector spaces in this paper are
considered to be over  $\mathbb{C}$. For a vector space $U$, $U((x))$ is the vector space of lower
truncated integer power series in $x$ with coefficients
 in $U$, $U[[x]]$ is the vector space of nonnegative integer
 power series in $x$ with coefficients in $U$, and
$U[[x,x^{-1}]]$ is the vector space of doubly infinite integer
 power series in $x$ with coefficients in $U$.

We recall the $q$-analog of the Virasoro algebra from \cite{BC} (cf. \cite{N}).
Let $q$ be a nonzero complex number. By definition, $D_{q}$ is
the Lie algebra  with generators ${\bf c}$ and $D^{\alpha}(n)\ (\alpha,n\in\Z)$,
subjects to relations
$$D^{-\alpha}(n)=-D^{\alpha}(n),$$
 \begin{eqnarray}\label{eq:2.10}
 [D^{\alpha}(n),D^{\beta}(m)] & = &
    (q-q^{-1})[\alpha m-\beta n]_{q}D^{\alpha+\beta}(m+n)
                                                    \nonumber\\
    &&{} -(q-q^{-1})[\alpha m+\beta n]_{q}D^{\alpha-\beta}(m+n)
                                                               \nonumber\\
    &&{} +([m]_{q^{{\alpha}+{\beta}}}-[m]_{q^{{\alpha}-{\beta}}})\delta_{m+n,0}{\bf c}
\end{eqnarray}
for $m,n,\alpha,\beta\in\Z$,
where ${\bf c}$ is central and
$[n]_{q}$ is the $q$-integer defined by
        $$[n]_{q}= \frac{q^{n}-q^{-n}}{q-q^{-1}}.$$

\br{rlimit-cases}
{\em Here, it is understood that $[n]_{q}=n$
for $q= 1$ and  $[n]_{q}=n(-1)^{n-1}$ for $q=-1$. This is consistent with the fact that
$$\lim_{q\rightarrow  1}[n]_{q}=n\  \mbox{ and}\   \lim_{q\rightarrow  -1}[n]_{q}=n(-1)^{n-1}.$$
The following are some simple properties of the $q$-integers:
\begin{eqnarray}\label{eq:2.11}
[-n]_{q}=-[n]_{q}, \  \   \  \ [n]_{q^{-1}}=[n]_{q}, \  \   \  \  [m]_{q^{n}}=\frac{[mn]_{q}}{[n]_q}.
\end{eqnarray}}
\er

We generalize the Lie algebra $D_{q}$ as follows:

\bd{def-DS}
{\em Let $S$ be an abelian group with a group homomorphism $\chi: S\rightarrow \C^{\times}$, i.e., a linear character
of $S$.
Define $D_{S}$ to be the Lie algebra  with generators $D^{\alpha}(n)$ for $\alpha\in S,\ n\in \Z$,
subjects to the following relations
$$D^{-\alpha}(n)=-D^{\alpha}(n),$$
 \begin{eqnarray}\label{Sbracket}
 [D^{\alpha}(n),D^{\beta}(m)] & = &
    (\chi(m\alpha -n\beta)-\chi(n\beta-m\alpha))D^{\alpha+\beta}(m+n)
                                                    \nonumber\\
    &&{} -(\chi(m\alpha +n\beta)-\chi(-m\alpha-n\beta))D^{\alpha-\beta}(m+n)
                                                               \nonumber\\
    &&{} +\left([m]_{\chi(\alpha+\beta)}-[m]_{\chi(\alpha-\beta)}\right)\delta_{m+n,0}{\bf c}
\end{eqnarray}
for $m,n\in \Z,\ \alpha,\beta\in S$, where ${\bf c}$ is central.}
\ed

\br{nonroot-of-unity}
{\em Consider $S=\Z$. Let $q$ be a nonzero complex number and let $\chi$ be the linear character of $\Z$
given by $\chi(n)=q^{n}$ for $n\in \Z$. Then $D_{S}$ is reduced to the Lie algebra $D_{q}$.
In the case that $q$ is not a root of unity, Lie algebra $D_{q}$ was studied
in \cite{gltw} in the context of vertex algebras and their quasi modules.}
\er


{\em From now on, we assume that $\chi: S\rightarrow \C^{\times}$ is one-to-one. }
Then, for $\alpha\in S$, $\chi(\alpha)=1$ if and only if $\alpha=0$.
For $\alpha\in S,$ set
\begin{eqnarray}\label{eq:2.16}
D^{\alpha}(x) = \sum_{n\in\mathbb{Z}}D^{\alpha}(n)x^{-n-1}.
\end{eqnarray}
Furthermore,  we set
 \begin{eqnarray}\label{eq:2.18}
\tilde{D}^{\alpha}(x)=D^{\alpha}(x)+ (1-\delta_{2\alpha,0})
\frac{1}{\chi(\alpha)-\chi(-\alpha)}{\bf c}x^{-1},
 \end{eqnarray}
where we use the one-to-one assumption on $\chi$.

By a straightforward calculation  (cf. \cite{gltw}) we have:

\begin{lem}\label{tildeD-characterization}
 The defining relations of $D_{S}$ are equivalent to
 \begin{eqnarray}\label{eq:2.19}
[{\bf c}, D_{S}]=0,\   \   \   \tilde{D}^{-\alpha}(x)=-\tilde{D}^{\alpha}(x),
 \end{eqnarray}
 \begin{eqnarray}\label{eq:2.20}
 [\tilde{D}^{\alpha}(x_{1}),\tilde{D}^{\beta}(x_{2})]
 &=&\chi(-\alpha)\tilde{D}^{\alpha+\beta}(\chi(-\alpha)x_{2})x_{1}^{-1}
         \delta\left(\frac{\chi(-\alpha-\beta)x_{2}}{x_{1}}\right)\nonumber\\
 &&-\chi(\alpha)\tilde{D}^{\alpha+\beta}(\chi(\alpha)x_{2})x_{1}^{-1}
            \delta\left(\frac{\chi(\alpha+\beta)x_{2}}{x_{1}}\right)
                                                            \nonumber\\
&&-\chi(-\alpha)\tilde{D}^{\alpha-\beta}(\chi(-\alpha)x_{2})x_{1}^{-1}
          \delta\left(\frac{\chi(\beta-\alpha)x_{2}}{x_{1}}\right) \nonumber\\
&&+\chi(\alpha)\tilde{D}^{\alpha-\beta}(\chi(\alpha)x_{2})x_{1}^{-1}
      \delta\left(\frac{\chi(\alpha-\beta)x_{2}}{x_{1}}\right)
                                                               \nonumber\\
&&-\chi(\alpha-\beta) \delta_{2(\alpha-\beta),0}\frac{\partial}{\partial x_{2}}
x_{1}^{-1}\delta\left(\frac{\chi(\alpha-\beta)x_{2}}{x_{1}}\right){\bf c}\nonumber\\
&&+\chi(\alpha+\beta)\delta_{2(\alpha+\beta),0}\frac{\partial}{\partial x_{2}}
x_{1}^{-1}\delta\left(\frac{\chi(\alpha+\beta)x_{2}}{x_{1}}\right){\bf c}\   \   \   \   \   \
 \end{eqnarray}
 for $\alpha,\beta\in S$.
 \end{lem}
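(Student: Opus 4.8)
The plan is to read the lemma as a purely formal, and fully reversible, re-encoding of the mode relations \eqref{Sbracket} in terms of generating functions. The two identities in \eqref{eq:2.19} are immediate: $[{\bf c},D_{S}]=0$ is built into Definition \ref{def-DS}, while $\tilde{D}^{-\alpha}(x)=-\tilde{D}^{\alpha}(x)$ follows from $D^{-\alpha}(n)=-D^{\alpha}(n)$ together with the fact that $\frac{1}{\chi(\alpha)-\chi(-\alpha)}$ changes sign under $\alpha\mapsto-\alpha$ while $\delta_{2\alpha,0}$ does not. Since $\chi$ is a homomorphism, the crucial simplification is $\chi(m\alpha\pm n\beta)=\chi(\alpha)^{m}\chi(\beta)^{\pm n}$, which lets every coefficient in \eqref{Sbracket} factor as a product of a power of $\chi(\alpha)$ and a power of $\chi(\beta)$.

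First I would form $[D^{\alpha}(x_{1}),D^{\beta}(x_{2})]=\sum_{m,n\in\Z}[D^{\alpha}(n),D^{\beta}(m)]x_{1}^{-n-1}x_{2}^{-m-1}$ and substitute \eqref{Sbracket}. For each of the four ``current'' terms I would set $k=m+n$ and split the double sum into a product of a series in $x_{2}$ and a series in $x_{1},x_{2}$. The $x_{2}$-series reassembles a single current at a rescaled argument, e.g. $\sum_{k}\chi(\alpha)^{k}D^{\alpha+\beta}(k)x_{2}^{-k-1}=\chi(-\alpha)D^{\alpha+\beta}(\chi(-\alpha)x_{2})$, and the remaining series is a formal delta function via $\sum_{n}a^{-n}x_{1}^{-n-1}x_{2}^{n}=x_{1}^{-1}\delta(a^{-1}x_{2}/x_{1})$. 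Carried out for all four terms, this reproduces the first four lines of \eqref{eq:2.20} verbatim, except with $D$ in place of $\tilde{D}$.

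The step I expect to be the main obstacle is the central ($\delta_{m+n,0}$) term, which upon setting $n=-m$ becomes ${\bf c}\sum_{m}\big([m]_{\chi(\alpha+\beta)}-[m]_{\chi(\alpha-\beta)}\big)x_{1}^{m-1}x_{2}^{-m-1}$, and whose role is simultaneously to upgrade $D$ to $\tilde{D}$ in the four main terms and to produce the genuine cocycle terms. I would treat each $\gamma\in\{\alpha+\beta,\alpha-\beta\}$ separately according to whether $2\gamma=0$. When $2\gamma\neq 0$, writing $[m]_{\chi(\gamma)}=\frac{\chi(\gamma)^{m}-\chi(\gamma)^{-m}}{\chi(\gamma)-\chi(-\gamma)}$ and resumming yields $\frac{1}{\chi(\gamma)-\chi(-\gamma)}\,x_{2}^{-1}x_{1}^{-1}\big(\delta(\chi(-\gamma)x_{2}/x_{1})-\delta(\chi(\gamma)x_{2}/x_{1})\big)$, which is exactly the central correction created when $D^{\gamma}$ is replaced by $\tilde{D}^{\gamma}$ via the $(1-\delta_{2\gamma,0})\frac{1}{\chi(\gamma)-\chi(-\gamma)}{\bf c}x^{-1}$ summand of \eqref{eq:2.18} in the two main terms carrying $D^{\gamma}$. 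When $2\gamma=0$ that summand vanishes, so those main terms need no correction; here $[m]_{\chi(\gamma)}=m\,\chi(\gamma)^{m-1}$, and resumming identifies ${\bf c}\sum_{m}m\,\chi(\gamma)^{m-1}x_{1}^{m-1}x_{2}^{-m-1}$ with a scalar multiple of $\frac{\partial}{\partial x_{2}}x_{1}^{-1}\delta(\chi(\gamma)x_{2}/x_{1})$, i.e. precisely the surviving derivative-of-delta terms in the last two lines of \eqref{eq:2.20}. The delicate bookkeeping lies in tracking the rescaled arguments $\chi(\pm\alpha)x_{2}$, matching the delta-function arguments across the four main terms, and keeping the two complementary regimes $2\gamma=0$ and $2\gamma\neq 0$ apart (in particular correctly reading off the scalar, hence the sign, of each $\partial_{x_{2}}$-delta term).

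Finally, because every operation used---extracting the coefficient of $x_{1}^{-n-1}x_{2}^{-m-1}$ and each formal-delta resummation---is an equivalence, reading the computation backwards shows that \eqref{eq:2.19}--\eqref{eq:2.20} imply \eqref{Sbracket}, which gives the asserted equivalence.
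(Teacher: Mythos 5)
Your overall strategy --- forming the double generating series, resumming the four current terms into rescaled currents times formal delta functions, and splitting the central term according to whether $2\gamma=0$ or $2\gamma\neq 0$ so that the latter parts are absorbed into the $D\mapsto\tilde D$ upgrade --- is exactly the ``straightforward calculation'' the paper leaves to the reader (it gives no details, only the pointer to \cite{gltw}), and your treatment of the four current terms and of the $2\gamma\neq0$ resummation is correct. The proof breaks down, however, at precisely the step you defer as ``correctly reading off the scalar, hence the sign, of each $\partial_{x_{2}}$-delta term''. For $2\gamma=0$ one has $\chi(\gamma)=\chi(\gamma)^{-1}=\pm1$ and the resummation gives
\[
{\bf c}\sum_{m\in\Z}m\,\chi(\gamma)^{m-1}x_{1}^{m-1}x_{2}^{-m-1}
\;=\;-\,\chi(\gamma)\,{\bf c}\,\frac{\partial}{\partial x_{2}}\,x_{1}^{-1}\delta\left(\frac{\chi(\gamma)x_{2}}{x_{1}}\right),
\]
with a minus sign: compare the coefficient of $x_{1}^{0}x_{2}^{-2}$ when $\chi(\gamma)=1$; it is $+1$ on the left (from $m=1$) but $-1$ in $\frac{\partial}{\partial x_{2}}x_{1}^{-1}\delta(x_{2}/x_{1})=\sum_{k}k\,x_{2}^{k-1}x_{1}^{-k-1}$ (from $k=-1$). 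Consequently the $+[m]_{\chi(\alpha+\beta)}$ term of (\ref{Sbracket}) resums to $-\chi(\alpha+\beta)\delta_{2(\alpha+\beta),0}\frac{\partial}{\partial x_{2}}x_{1}^{-1}\delta\bigl(\chi(\alpha+\beta)x_{2}/x_{1}\bigr){\bf c}$ and the $-[m]_{\chi(\alpha-\beta)}$ term to $+\chi(\alpha-\beta)\delta_{2(\alpha-\beta),0}\frac{\partial}{\partial x_{2}}x_{1}^{-1}\delta\bigl(\chi(\alpha-\beta)x_{2}/x_{1}\bigr){\bf c}$, i.e.\ your computation actually produces the \emph{negatives} of the last two lines of (\ref{eq:2.20}), not those lines themselves.

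This is not a removable bookkeeping slip on your side: starting from Definition \ref{def-DS} as printed, the identity (\ref{eq:2.20}) as printed is unreachable, because the two differ by the sign of the cocycle terms. A concrete test: for $S=\Z$, $\chi(n)=q^{n}$ with $q$ generic and $\alpha=\beta=1$, relation (\ref{Sbracket}) gives $[D^{1}(2),D^{1}(-2)]=(q^{-4}-q^{4})D^{2}(0)+(2-q^{2}-q^{-2}){\bf c}$, while extracting the coefficient of $x_{1}^{-3}x_{2}$ from the right-hand side of (\ref{eq:2.20}) yields $(q^{-4}-q^{4})D^{2}(0)+(-2-q^{2}-q^{-2}){\bf c}$. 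The source of the mismatch is the mode convention in the central term: under $\delta_{m+n,0}$ one has $[m]_{\chi(\gamma)}=-[n]_{\chi(\gamma)}$, and (\ref{eq:2.20}) is the correct outcome when the cocycle of the defining bracket reads $([n]_{\chi(\alpha+\beta)}-[n]_{\chi(\alpha-\beta)})\delta_{m+n,0}{\bf c}$ with $n$ the mode paired with $x_{1}$ (the convention of \cite{gltw}, and the one consistent with the paper's later computation in Theorem \ref{pisomorphism} and the form (\ref{eform-gS})), whereas (\ref{Sbracket}) as written uses $[m]$ with $m$ paired with $x_{2}$. A blind proof must therefore either derive the sign-corrected version of (\ref{eq:2.20}) or flag this inconsistency and fix a convention; asserting, as you do, that the resummation gives ``precisely'' the printed lines is the step that fails. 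Two smaller points: when you write $[m]_{\chi(\gamma)}=\frac{\chi(\gamma)^{m}-\chi(\gamma)^{-m}}{\chi(\gamma)-\chi(-\gamma)}$ for $2\gamma\neq0$ you should say explicitly that injectivity of $\chi$ is what guarantees $\chi(\gamma)\neq\pm1$ there, and your claim that the reverse implication is automatic because each resummation is ``an equivalence'' deserves one line of justification (coefficient extraction in $x_{1},x_{2}$ recovers all of (\ref{Sbracket}) from (\ref{eq:2.20}), once the signs are right).
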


For $\alpha,\beta\in S$, set
\begin{eqnarray}\label{eq:2.21}
D^{\alpha,\beta}(x) = \chi(\beta)\tilde{D}^{\alpha}(\chi(\beta)x).
\end{eqnarray}
From (\ref{eq:2.20}), we have
\begin{eqnarray}\label{eq:2.22}
&&[D^{\alpha,\beta}(x_{1}),D^{\mu,\nu}(x_{2})] \nonumber\\
&=& \chi(\beta+\nu)[\tilde{D}^{\alpha}(\chi(\beta)x_{1}),\tilde{D}^{\mu}(\chi(\nu)x_{2})] \nonumber\\
   & = &D^{\alpha+\mu,\nu-\alpha}(x_{2})x_{1}^{-1}
                \delta\left(\frac{\chi(\nu-\alpha)x_{2}}{\chi(\mu+\beta)x_{1}}\right)
              -D^{\alpha+\mu,\alpha+\nu}(x_{2})x_{1}^{-1}
                      \delta\left(\frac{\chi(\alpha+\nu)x_{2}}{\chi(\beta-\mu)x_{1}}\right)
                                                              \nonumber\\
         && -D^{\alpha-\mu,\nu-\alpha}(x_{2})x_{1}^{-1}
                 \delta\left(\frac{\chi(\nu-\alpha)x_{2}}{\chi(\beta-\mu)x_{1}}\right)
         +D^{\alpha-\mu,\alpha+\nu}(x_{2})x_{1}^{-1}
               \delta\left(\frac{\chi(\alpha+\nu)x_{2}}{\chi(\mu+\beta)x_{1}}\right)
                                                                   \nonumber\\
&& -\chi(\alpha-\mu)\delta_{2(\alpha-\mu),0}\frac{\partial}{\partial x_{2}}x_{1}^{-1}\delta\left(\frac{\chi(\alpha-\mu+\nu-\beta)x_{2}}{x_{1}}\right){\bf c}\nonumber\\
   &&    +\chi(\alpha+\mu)\delta_{2(\alpha+\mu),0}
       \frac{\partial}{\partial x_{2}}x_{1}^{-1}\delta\left(\frac{\chi(\alpha+\mu+\nu-\beta)x_{2}}{x_{1}}\right){\bf c}\  \  \  \
\end{eqnarray}
for $\alpha,\beta,\mu,\nu\in S$.

Using this we get another characterization of the Lie algebra $D_{S}$.

\begin{prop}\label{psecond-def-D}
Lie algebra $D_{S}$ is isomorphic to the Lie algebra ${\mathcal{L}}$ with generators ${\bf c}$
and $D^{\alpha,\beta}(n)$
for $\alpha,\beta\in S,\  n\in \Z$, subject to relations
\begin{eqnarray}\label{eq:2.23}
[{\bf c}, {\mathcal{L}}]=0,\   \   \   D^{-\alpha,\beta}(n)=-D^{\alpha,\beta}(n),\  \  \
D^{\alpha,\beta+\gamma}(n)=\chi(\gamma)^{-n}D^{\alpha,\beta}(n)
\end{eqnarray}
 and the relation (\ref{eq:2.22})
 with $D^{\alpha,\beta}(x)=\sum\limits_{n\in\Z}D^{\alpha,\beta}(n)x^{-n-1}$.
\end{prop}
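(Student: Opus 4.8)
The plan is to construct mutually inverse Lie algebra homomorphisms $\phi:\mathcal{L}\to D_{S}$ and $\psi:D_{S}\to\mathcal{L}$ directly from the substitution (\ref{eq:2.21}) and its inverse. Since both algebras are presented by generators and relations, each map need only be specified on generators, after which well-definedness reduces to verifying the defining relations of the target. First I would build $\phi$. Inside $D_{S}$, form the elements $D^{\alpha,\beta}(n)$ as the coefficients of $D^{\alpha,\beta}(x)=\chi(\beta)\tilde{D}^{\alpha}(\chi(\beta)x)$, with $\tilde{D}^{\alpha}$ given by (\ref{eq:2.18}). These already satisfy (\ref{eq:2.22}) by the derivation recorded just before the statement, and they satisfy (\ref{eq:2.23}): centrality of ${\bf c}$ is inherited from $D_{S}$; the relation $D^{-\alpha,\beta}(x)=-D^{\alpha,\beta}(x)$ follows from $\tilde{D}^{-\alpha}(x)=-\tilde{D}^{\alpha}(x)$ in (\ref{eq:2.19}); and the scaling relation $D^{\alpha,\beta+\gamma}(n)=\chi(\gamma)^{-n}D^{\alpha,\beta}(n)$ drops out of the identity $D^{\alpha,\beta+\gamma}(x)=\chi(\gamma)D^{\alpha,\beta}(\chi(\gamma)x)$ upon expanding in powers of $x$. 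Hence $\phi$, sending the generators $D^{\alpha,\beta}(n)$ and ${\bf c}$ of $\mathcal{L}$ to these elements, is a well-defined homomorphism.

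Next I would invert the construction to define $\psi$. The key observation is that the relation $D^{\alpha,\beta}(n)=\chi(\beta)^{-n}D^{\alpha,0}(n)$ from (\ref{eq:2.23}) shows $\mathcal{L}$ is generated by ${\bf c}$ together with the elements $D^{\alpha,0}(n)$, and that setting $\beta=\nu=0$ in (\ref{eq:2.22})---simplifying the delta-function arguments via $\chi(-\alpha)/\chi(\mu)=\chi(-\alpha-\mu)$ and the like---reproduces (\ref{eq:2.20}) verbatim with $\tilde{D}^{\alpha}$ replaced by $D^{\alpha,0}$. Thus in $\mathcal{L}$ the generating functions $\tilde{D}^{\alpha}(x):=D^{\alpha,0}(x)$ satisfy (\ref{eq:2.19}) and (\ref{eq:2.20}). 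Defining $D^{\alpha}(x):=D^{\alpha,0}(x)-(1-\delta_{2\alpha,0})\frac{1}{\chi(\alpha)-\chi(-\alpha)}{\bf c}\,x^{-1}$---which is legitimate because the one-to-one hypothesis on $\chi$ forces $\chi(\alpha)\neq\chi(-\alpha)$ whenever $2\alpha\neq 0$---and inverting (\ref{eq:2.18}), Lemma \ref{tildeD-characterization} then guarantees that these $D^{\alpha}(n)$ together with ${\bf c}$ obey the defining relations (\ref{Sbracket}) of $D_{S}$. So $\psi$, sending $D^{\alpha}(n)$ and ${\bf c}$ to these elements, is a well-defined homomorphism.

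Finally I would check that $\phi$ and $\psi$ are mutually inverse on generators. For $\phi\circ\psi$, applying $\phi$ to $\psi(D^{\alpha}(x))$ returns $\tilde{D}^{\alpha}(x)$ minus the identical ${\bf c}$-correction, which is exactly $D^{\alpha}(x)$. For $\psi\circ\phi$, applying $\psi$ to $\phi(D^{\alpha,\beta}(x))=\chi(\beta)\tilde{D}^{\alpha}(\chi(\beta)x)$ yields $\chi(\beta)D^{\alpha,0}(\chi(\beta)x)$, since $\psi$ carries $\tilde{D}^{\alpha}$ to $D^{\alpha,0}$, and this equals $D^{\alpha,\beta}(x)$ by the scaling relation in (\ref{eq:2.23}). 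Both composites also fix ${\bf c}$, so $\phi$ and $\psi$ are inverse isomorphisms. The computations are entirely formal, so I do not expect a genuine obstacle; the only point demanding care is the bookkeeping when specializing the delta-function arguments of (\ref{eq:2.22}) at $\beta=\nu=0$ to recover (\ref{eq:2.20}), and keeping consistent track of the scalar factors $\chi(\beta)$ and the variable rescalings $x\mapsto\chi(\beta)x$ throughout.
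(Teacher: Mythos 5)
Your proposal is correct and follows essentially the same route as the paper: both directions are obtained by checking the defining relations on generators, with Lemma \ref{tildeD-characterization} used to legitimize the map out of $D_{S}$ via $\tilde{D}^{\alpha}(x)\mapsto D^{\alpha,0}(x)$, exactly as in the paper's proof. The only difference is cosmetic---you verify explicitly that the two composites are the identity on generators, where the paper simply notes that both homomorphisms are natural and surjective and concludes that $\theta$ is an isomorphism.
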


\begin{proof} Given Lie algebra $D_{S}$, with $D^{\alpha,\beta}(x)$ defined in (\ref{eq:2.21}),
we see that (\ref{eq:2.23}) and (\ref{eq:2.22}) hold.
It follows that there is a Lie algebra homomorphism $\theta$ from $\mathcal{L}$ onto $D_{S}$
sending $D^{\alpha,\beta}(x)$ to $\chi(\beta)\tilde{D}^{\alpha}(\chi(\beta)x)$
for $\alpha,\beta\in S$.
On the other hand, for Lie algebra $\mathcal{L}$ we have
$$D^{\alpha,\beta}(x)=\chi(\beta)D^{\alpha,0}(\chi(\beta)x)\  \ \mbox{ and }\  D^{-\alpha,0}(x)=-D^{\alpha,0}(x)
\ \mbox{ for }\ \alpha,\beta\in S.$$
It is clear that relation (\ref{eq:2.22}) is equivalent to (\ref{eq:2.20})
with $\tilde{D}^{\alpha}(x)=D^{\alpha,0}(x)$ for $\alpha\in S$.
Then there exists a natural Lie algebra homomorphism from $D_{S}$ onto ${\mathcal{L}}$,
sending $\tilde{D}^{\alpha}(x)$ to $D^{\alpha,0}(x)$
for $\alpha\in S$. Consequently, $\theta$ is a Lie algebra isomorphism.
\end{proof}


\br{rS0}
{\em Set
\begin{eqnarray}
S^{0}=\{ \alpha\in S\ |\ 2\alpha =0\},
\end{eqnarray}
a subgroup of $S$.  Note that if $S^{0}=\{0\}$, relation (\ref{eq:2.22}) becomes
\begin{eqnarray}\label{eqS-odd}
&&[D^{\alpha,\beta}(x_{1}),D^{\mu,\nu}(x_{2})] \nonumber\\
   & = &D^{\alpha+\mu,\nu-\alpha}(x_{2})x_{1}^{-1}
                \delta\left(\frac{\chi(\nu-\alpha)x_{2}}{\chi(\mu+\beta)x_{1}}\right)
              -D^{\alpha+\mu,\alpha+\nu}(x_{2})x_{1}^{-1}
                      \delta\left(\frac{\chi(\alpha+\nu)x_{2}}{\chi(\beta-\mu)x_{1}}\right)
                                                              \nonumber\\
         && -D^{\alpha-\mu,\nu-\alpha}(x_{2})x_{1}^{-1}
                 \delta\left(\frac{\chi(\nu-\alpha)x_{2}}{\chi(\beta-\mu)x_{1}}\right)
         +D^{\alpha-\mu,\alpha+\nu}(x_{2})x_{1}^{-1}
               \delta\left(\frac{\chi(\alpha+\nu)x_{2}}{\chi(\mu+\beta)x_{1}}\right)
                                                                   \nonumber\\
&& +\left(\delta_{\alpha,-\mu}-\delta_{\alpha,\mu}\right)\frac{\partial}{\partial x_{2}}
x_{1}^{-1}\delta\left(\frac{\chi(\nu-\beta)x_{2}}{x_{1}}\right){\bf c}\  \  \  \
\end{eqnarray}
for $\alpha,\beta,\mu,\nu\in S$. }
\er

In the following, we shall relate $D_{S}$ with the affine algebra of a new Lie algebra.

\bd{gS} {\em Let $S$ be an abelian group as before.
Define $\g_{S}$ to be the Lie algebra with generators $d^{\alpha,\beta}$
for $\alpha, \beta\in S$,
subject to relations
\begin{eqnarray}\label{ed-symmetry}
d^{-\alpha,\beta}=-d^{\alpha,\beta},
\end{eqnarray}
\begin{eqnarray}
&& [d^{\alpha,\beta},d^{\mu,\nu}] =
    \delta_{\alpha+\mu,\nu-\beta}d^{\alpha+\mu,-\alpha+\nu}
   -\delta_{\alpha+\mu,\beta -\nu}d^{\alpha+\mu,\alpha+\nu}
                         \nonumber\\
  &&\hspace{2.2cm}
   -\delta_{\alpha-\mu,\nu-\beta}d^{\alpha-\mu,\nu -\alpha}
   +\delta_{\alpha-\mu,\beta-\nu}d^{\alpha-\mu,\alpha+\nu}  \label{eq:2.24}
\end{eqnarray}
for $\alpha,\beta,\mu,\nu\in S$.}
\ed

The following result can be found in \cite{li-gamma-twisted} (Lemma 4.1) (cf. \cite{G-K-K}):

\begin{lem}\label{lcovariant-Lie-algebra}
Let $K$ be any Lie algebra with a symmetric invariant bilinear form $\<\cdot,\cdot\>$ and
let $G$ be an automorphism group of $K$, preserving $\<\cdot,\cdot\>$,
such that for any $a,b\in K$,
$$[ga,b]=0\    \mbox{ and }\  \<ga,b\>=0\   \   \  \mbox{ for all but finitely many }g\in G.$$
 Define a new multiplicative operation $[\cdot,\cdot]_{G}$ and a new bilinear form $\<\cdot,\cdot\>_{G}$ on $K$ by
 $$[a,b]_{G}=\sum_{g\in G}[ga,b]\     \mbox{ and }\   \<a,b\>_{G}=\sum_{g\in G}\<ga,b\>.$$
 Set
 $$I_{G}={\rm span}\{ a-ga\ |\ a\in K,\ g\in G\}.$$
 Then $I_{G}$ is a two-sided ideal of the non-associative algebra $(K,[\cdot,\cdot]_{G})$ and the quotient algebra
 modulo $I_{G}$ is a Lie algebra, which is denoted by $K/G$ and called the {\em $G$-covariant algebra} of $K$.
 Furthermore, $\<\cdot,\cdot\>_{G}$ reduces to a symmetric invariant bilinear form on $K/G$.
 \end{lem}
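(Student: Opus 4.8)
The plan is to reduce everything to two elementary facts about the $G$-action. Since each $g\in G$ is an automorphism preserving the bracket and the form, we have $g[a,b]=[ga,gb]$ and $\langle ga,gb\rangle=\langle a,b\rangle$; and since $G$ is a group, for fixed $h\in G$ the maps $g\mapsto gh$ and $g\mapsto hg$ are bijections of $G$, so one may freely re-index any of the sums over $G$ (all finite, by the hypothesis on $(K,G)$). First I would record the translation identities. Re-indexing $g\mapsto gh$ gives $[ha,b]_G=\sum_{g}[gha,b]=[a,b]_G$, and likewise $\langle ha,b\rangle_G=\langle a,b\rangle_G$, so the first slot is $G$-invariant; pulling an $h$ out through the automorphism property gives $[a,hb]_G=h[a,b]_G$. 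I would also isolate the \emph{coinvariance principle}: modulo $I_G$ one has $c\equiv gc$ for every $c\in K$, $g\in G$, so any summand may be hit by an arbitrary element of $G$ without changing its class in $K/I_G$.

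With these in hand the ideal property is immediate. For a generator $a-ha$ of $I_G$, left multiplication gives $[a-ha,b]_G=[a,b]_G-[ha,b]_G=0$ by first-slot invariance, so in fact $[I_G,K]_G=0$; right multiplication gives $[b,a-ha]_G=(1-h)[b,a]_G$, and writing $c=[b,a]_G$ this equals $c-hc\in I_G$. Hence $I_G$ is a two-sided ideal and $[\cdot,\cdot]_G$ descends to a well-defined bilinear operation on $K/I_G$. Antisymmetry modulo $I_G$ follows in the same style: using coinvariance and the automorphism property, $[b,a]_G=\sum_g[gb,a]\equiv\sum_g g^{-1}[gb,a]=\sum_g[b,g^{-1}a]=-\sum_g[g^{-1}a,b]=-[a,b]_G\pmod{I_G}$.

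The hard part will be the Jacobi identity, where the organizing observation is that after re-indexing the nested brackets symmetrize. Expanding the outer bracket and setting $k=hg$ one finds $[[a,b]_G,c]_G=\sum_{g,h\in G}[[ga,hb],c]$ with $g,h$ now ranging independently, and the other two cyclic terms admit the analogous forms $\sum_{g,h}[[gb,hc],a]$ and $\sum_{g,h}[[gc,ha],b]$. Applying the Jacobi identity of $K$ to each summand, $[[ga,hb],c]=-[[hb,c],ga]-[[c,ga],hb]$, and then using coinvariance to absorb the leftover group elements ($g^{-1}$ in the first family, $h^{-1}$ in the second) through the automorphism property, each resulting double sum reduces modulo $I_G$ to exactly one of the other cyclic terms; for instance $-\sum_{g,h}[[hb,c],ga]\equiv -\sum_{g,h}[[g^{-1}hb,g^{-1}c],a]\equiv -[[b,c]_G,a]_G$. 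This yields $[[a,b]_G,c]_G+[[b,c]_G,a]_G+[[c,a]_G,b]_G\equiv 0\pmod{I_G}$, which is precisely Jacobi in $K/I_G$. I expect the bookkeeping of these re-indexings---checking that each shift is a genuine bijection of $G$ and that coinvariance is applied in the correct slot---to be the only real source of friction.

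Finally, for the bilinear form I would first check that $\langle\cdot,\cdot\rangle_G$ is already symmetric on $K$: from symmetry and $G$-invariance of $\langle\cdot,\cdot\rangle$ one gets $\langle ga,b\rangle=\langle b,ga\rangle=\langle g^{-1}b,a\rangle$, and summing over $g$ gives $\langle a,b\rangle_G=\langle b,a\rangle_G$. First-slot invariance $\langle ha,b\rangle_G=\langle a,b\rangle_G$ shows $\langle I_G,K\rangle_G=0$, and symmetry then gives $\langle K,I_G\rangle_G=0$, so $\langle\cdot,\cdot\rangle_G$ descends to a symmetric form on $K/I_G$. Invariance is the cleanest step of all: expanding exactly as above, $\langle[a,b]_G,c\rangle_G=\sum_{g,h}\langle[ga,hb],c\rangle=\sum_{g,h}\langle ga,[hb,c]\rangle=\langle a,[b,c]_G\rangle_G$, where the middle equality is just the invariance of $\langle\cdot,\cdot\rangle$ on $K$ applied termwise. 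This completes the argument.
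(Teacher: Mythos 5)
Your proof is correct. Note that the paper itself does not prove this lemma at all: it quotes the result as Lemma 4.1 of the reference \cite{li-gamma-twisted} (cf. \cite{G-K-K}), so there is no internal argument to diverge from, and your write-up serves as a self-contained substitute. Your route is the natural direct verification: the translation identities $[ha,b]_{G}=[a,b]_{G}$, $[a,hb]_{G}=h[a,b]_{G}$ give the two-sided ideal property at once; the coinvariance principle $c\equiv gc \pmod{I_{G}}$, combined with termwise use of antisymmetry and Jacobi in $K$ and re-indexing, gives skew-symmetry and Jacobi in the quotient; and the form descends because $\langle\cdot,\cdot\rangle_{G}$ is already symmetric and first-slot invariant on $K$ itself. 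All the re-indexings you use ($g\mapsto gh$, $g\mapsto g^{-1}$, $k=hg$ for fixed $h$) are genuine bijections of $G$, so those steps are sound. The one point worth stating explicitly in a final version is the finiteness of the double sums: the hypothesis only controls single sums $\sum_{g}[ga,b]$, and the claim that $[[ga,hb],c]=0$ for all but finitely many pairs $(g,h)$ requires applying the hypothesis twice (first to $(a,b)$, then, for each of the finitely many surviving $g$, to the pair $([ga,b],c)$). Your derivation actually handles this automatically, since you expand the outer bracket first, $[[a,b]_{G},c]_{G}=\sum_{h}[h[a,b]_{G},c]$, and only then open up $h[a,b]_{G}$, but the justification deserves a sentence; likewise one should remark that each congruence modulo $I_{G}$ is applied to a sum with finitely many nonzero terms, so the termwise substitutions $x\equiv g^{-1}x$ may legitimately be summed.
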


The following is a more explicit construction of the Lie algebra $\g_{S}$:

\bp{Kalgebra}
Let $K$ be a vector space with a basis $\{ F_{\alpha,\beta}\ |\ \alpha,\beta\in S\}$.
Define a multiplicative operation $*$ on $K$ by
\begin{eqnarray*}
F_{\alpha,\beta}*F_{\mu,\nu} &=&
    \delta_{\alpha+\mu,\nu-\beta}F_{\alpha+\mu,-\alpha+\nu}
   -\delta_{\alpha+\mu,\beta -\nu}F_{\alpha+\mu,\alpha+\nu}\nonumber\\
            &&   -\delta_{\alpha-\mu,\nu-\beta}F_{\alpha-\mu,\nu -\alpha}
   +\delta_{\alpha-\mu,\beta-\nu}F_{\alpha-\mu,\alpha+\nu}
\end{eqnarray*}
for $\alpha,\beta,\mu,\nu\in S$. Set
$$J={\rm span}\{ F_{-\alpha,\beta}+F_{\alpha,\beta}\ |\  \alpha,\beta\in S\}.$$
Then $J$ is a two-sided ideal of the non-associative algebra $(K,*)$ and  the quotient algebra $K/J$ is a Lie algebra.
Furthermore, $\g_{S}$ is isomorphic to the Lie algebra $K/J$ with $d^{\alpha,\beta}$ corresponding to
$F_{\alpha,\beta}+J$ for $\alpha,\beta\in S$.
\ep

\begin{proof} We shall apply Lemma \ref{lcovariant-Lie-algebra}.
First, define an operation ``$\cdot$'' on $K$ by
\begin{eqnarray}
F_{\alpha,\beta}\cdot F_{\mu,\nu} =\delta_{\nu, \alpha+\beta+\mu}F_{\alpha+\mu,\beta+\mu}
\end{eqnarray}
for $\alpha,\beta,\mu,\nu\in S$.
It is straightforward to show that $(K,\cdot)$ is an associative algebra. Then $K$ is naturally a Lie algebra, where
\begin{eqnarray*}
[F_{\alpha,\beta},F_{\mu,\nu}]
&=&\delta_{\nu, \alpha+\beta+\mu}F_{\alpha+\mu,\beta+\mu}-\delta_{\beta, \mu+\nu+\alpha}F_{\mu +\alpha,\nu +\alpha}\\
&=& \delta_{\alpha+\mu,\nu-\beta}F_{\alpha+\mu,-\alpha+\nu}
   -\delta_{\alpha+\mu,\beta -\nu}F_{\alpha+\mu,\alpha+\nu}
\end{eqnarray*}
for $\alpha,\beta,\mu,\nu\in S$.
Let $\theta$ be the linear endomorphism of $K$ defined by
$$\theta(F_{\alpha,\beta})=F_{-\alpha,\beta}\    \    \   \mbox{ for }\alpha,\beta \in S.$$
It is straightforward to show that $\theta$ is an order $2$ anti-automorphism of the associative algebra $K$.
Then $-\theta$ is an order $2$ automorphism of the Lie algebra $K$.   From  Lemma \ref{lcovariant-Lie-algebra}, we have
\begin{eqnarray*}
[F_{\alpha,\beta},F_{\mu,\nu}]_{\<-\theta\>} &=&
    \delta_{\alpha+\mu,\nu-\beta}F_{\alpha+\mu,-\alpha+\nu}
   -\delta_{\alpha+\mu,\beta -\nu}F_{\alpha+\mu,\alpha+\nu}\nonumber\\
 &&  -\delta_{-\alpha+\mu,\nu-\beta}F_{-\alpha+\mu,\alpha+\nu}+\delta_{-\alpha+\mu,\beta-\nu}F_{-\alpha+\mu,\nu -\alpha}
\end{eqnarray*}
for $\alpha,\beta,\mu,\nu\in S$. Then the first part of this lemma follows immediately. As for the second part, we first have a
Lie algebra homomorphism $\psi$ from $\g_{S}$ onto $K/J$
with $\psi(d^{\alpha,\beta})=F_{\alpha,\beta}+J$ for $\alpha,\beta\in S$.
By using a basis of $K/J$, we see that $\psi$ is actually an isomorphism.
\end{proof}

From Proposition \ref{Kalgebra} we readily have:

\begin{lem}\label{GSautomorphism}
The abelian group $S$ acts on $\g_{S}$ as an automorphism group with
\begin{eqnarray}
\gamma \cdot d^{\alpha,\beta}=d^{\alpha,\beta+\gamma}
\end{eqnarray}
for $\gamma,\alpha,\beta\in S$.
\end{lem}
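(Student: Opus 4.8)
The plan is to work entirely within the explicit model $K/J$ furnished by Proposition \ref{Kalgebra}, where $\g_S \cong K/J$ via $d^{\alpha,\beta}\mapsto F_{\alpha,\beta}+J$. Since the proposed action $\gamma\cdot d^{\alpha,\beta}=d^{\alpha,\beta+\gamma}$ merely translates the second index, I would lift it to the level of $K$ by defining, for each $\gamma\in S$, the linear map $\sigma_\gamma:K\to K$ determined on the basis by $\sigma_\gamma(F_{\alpha,\beta})=F_{\alpha,\beta+\gamma}$. The strategy is then to show that each $\sigma_\gamma$ is an automorphism of $K$ that preserves the ideal $J$, so that it descends to an automorphism of the quotient, and finally that $\gamma\mapsto\sigma_\gamma$ is a group action.

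The first step is to verify that each $\sigma_\gamma$ is an automorphism of the associative algebra $(K,\cdot)$. Because the product $F_{\alpha,\beta}\cdot F_{\mu,\nu}=\delta_{\nu,\alpha+\beta+\mu}F_{\alpha+\mu,\beta+\mu}$ involves the second indices only through the Kronecker constraint and a uniform translate of the surviving index, computing $\sigma_\gamma(F_{\alpha,\beta})\cdot\sigma_\gamma(F_{\mu,\nu})=F_{\alpha,\beta+\gamma}\cdot F_{\mu,\nu+\gamma}$ leaves the constraint $\nu=\alpha+\beta+\mu$ unchanged (the $\gamma$ cancels in $\delta_{\nu+\gamma,\alpha+\beta+\gamma+\mu}$) and sends the output's second index from $\beta+\mu$ to $\beta+\mu+\gamma$, which is exactly $\sigma_\gamma$ applied to the product. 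Hence $\sigma_\gamma$ respects $\cdot$ and is therefore a Lie algebra automorphism of $(K,[\cdot,\cdot])$; it is invertible with inverse $\sigma_{-\gamma}$, and the composition law $\sigma_\gamma\sigma_{\gamma'}=\sigma_{\gamma+\gamma'}$ is immediate, so that $\gamma\mapsto\sigma_\gamma$ is already an action of $S$ on $K$ by automorphisms.

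The second step is to check compatibility with the ideal $J=\span\{F_{-\alpha,\beta}+F_{\alpha,\beta}\}$. Applying $\sigma_\gamma$ to a spanning element of $J$ gives $F_{-\alpha,\beta+\gamma}+F_{\alpha,\beta+\gamma}$, which is again a spanning element of $J$; thus $\sigma_\gamma(J)\subseteq J$, and invertibility forces $\sigma_\gamma(J)=J$. Consequently each $\sigma_\gamma$ descends to a Lie algebra automorphism $\bar\sigma_\gamma$ of $K/J$, the descended maps still satisfy $\bar\sigma_\gamma\bar\sigma_{\gamma'}=\bar\sigma_{\gamma+\gamma'}$, and transporting through the isomorphism of Proposition \ref{Kalgebra} yields $\bar\sigma_\gamma(d^{\alpha,\beta})=d^{\alpha,\beta+\gamma}$, which is the asserted formula.

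There is essentially no obstacle: the only thing that could fail would be incompatibility of the translation $\beta\mapsto\beta+\gamma$ with the defining product, but the form of the associative multiplication---a translation-invariant delta constraint together with a uniform shift of the surviving index---makes the automorphism property automatic. The one point I would be careful about is to argue in the associative model $K$ rather than attempting to verify invariance directly against the four-term Lie bracket (\ref{eq:2.24}) of $\g_S$, since the former avoids all case analysis; this is precisely why the lemma follows readily from Proposition \ref{Kalgebra}.
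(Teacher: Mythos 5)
Your strategy---lifting the shift to the explicit model $K$ of Proposition \ref{Kalgebra} and descending to $K/J$---is exactly the paper's intended route (the paper gives no details beyond ``From Proposition \ref{Kalgebra} we readily have''). But there is a gap in your descent step. The Lie algebra structure that $K/J$ carries is the one induced by the four-term operation $*$ in the statement of Proposition \ref{Kalgebra} (equivalently, by the covariant bracket $[a,b]+[(-\theta)a,b]$ of Lemma \ref{lcovariant-Lie-algebra}), and $J$ is an ideal only for $*$; it is \emph{not} an ideal of the commutator algebra $(K,[\cdot,\cdot])$. For instance, with $S=\Z$ one has $[F_{1,0}+F_{-1,0},\,F_{0,1}]=F_{1,0}-F_{-1,0}\notin J$. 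So from what you actually prove---that $\sigma_\gamma$ is an automorphism of $(K,\cdot)$, hence of $(K,[\cdot,\cdot])$, and that $\sigma_\gamma(J)=J$---it does not follow that $\bar\sigma_\gamma$ respects the bracket of $K/J$: that would require $\sigma_\gamma(a*b)\equiv\sigma_\gamma(a)*\sigma_\gamma(b)\pmod{J}$, which your argument never addresses. Your closing remark, that one should avoid the four-term bracket altogether, points at exactly the place where the trouble lies: that bracket is the one the quotient carries.

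Fortunately the gap closes in one line, in either of two ways. First, you can apply your translation-invariance observation directly to the four-term formula for $*$: the second indices $\beta,\nu$ enter only through the differences $\nu-\beta$ and $\beta-\nu$ inside the Kronecker deltas, and every surviving term's second index shifts uniformly by $\gamma$, so $\sigma_\gamma(F_{\alpha,\beta}*F_{\mu,\nu})=F_{\alpha,\beta+\gamma}*F_{\mu,\nu+\gamma}$ exactly; this is no harder than the computation you did for $\cdot$. Second, you can keep your associative-algebra computation and add the observation that $\sigma_\gamma$ commutes with $\theta$ (one map negates the first index, the other shifts the second); since $a*b$ agrees with $[a,b]+[(-\theta)a,b]$ modulo $J$, commutation with $\theta$ together with preservation of $[\cdot,\cdot]$ and of $J$ yields the required congruence. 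With either addition, the rest of your proof (invertibility, the composition law $\sigma_\gamma\sigma_{\gamma'}=\sigma_{\gamma+\gamma'}$, and transport through $d^{\alpha,\beta}\mapsto F_{\alpha,\beta}+J$) is correct and completes the lemma.
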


Now,  let $\chi: S\rightarrow \C^{\times}$ be a linear character as in the definition of $D_{S}$.
It is straightforward to see that $\g_{S}$ admits a bilinear form $\<\cdot,\cdot\>_{\chi}$ such that
\begin{eqnarray}\label{eform-gS}
\langle d^{\alpha,\beta},d^{\mu,\nu}\rangle_{\chi}=
\chi(\alpha+\mu)\delta_{2(\alpha+\mu),0}\delta_{\alpha+\mu,\beta-\nu}
-\chi(\alpha-\mu)\delta_{2(\alpha-\mu),0}\delta_{\alpha-\mu,\beta-\nu}
\end{eqnarray}
for $\alpha,\beta,\mu,\nu\in S$. We have:

\begin{lem}
The defined bilinear form $\langle\cdot,\cdot\rangle_{\chi}$ on Lie algebra $\g_{S}$ is symmetric and invariant, and
 the group action of $S$ preserves $\langle\cdot,\cdot\rangle_{\chi}$.  Furthermore, for any $\alpha,\beta,\mu,\nu\in S$,
\begin{eqnarray}
[d^{\alpha,\beta+\gamma},d^{\mu,\nu}]=0\   \   \mbox{ and } \  \
\langle d^{\alpha,\beta+\gamma},d^{\mu,\nu}\rangle_{\chi}=0
\end{eqnarray}
for all but finitely many $\gamma\in S$.
\end{lem}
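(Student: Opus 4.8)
The plan is to deduce the symmetry and the invariance of $\langle\cdot,\cdot\rangle_{\chi}$ simultaneously from Lemma \ref{lcovariant-Lie-algebra}, by exhibiting $\langle\cdot,\cdot\rangle_{\chi}$ as the covariant descent of a symmetric invariant form on the Lie algebra $(K,[\cdot,\cdot])$ of Proposition \ref{Kalgebra}, and to settle the $S$-invariance and the two finiteness statements by direct inspection of the defining formulas. This is cleaner than expanding the four-term bracket (\ref{eq:2.24}) against (\ref{eform-gS}) by brute force.

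Recall from Proposition \ref{Kalgebra} that $\g_{S}\cong K/J$ is the $\langle-\theta\rangle$-covariant algebra of $(K,[\cdot,\cdot])$, where the Lie bracket comes from the associative product $F_{\alpha,\beta}\cdot F_{\mu,\nu}=\delta_{\nu,\alpha+\beta+\mu}F_{\alpha+\mu,\beta+\mu}$, the map $\theta(F_{\alpha,\beta})=F_{-\alpha,\beta}$ is an order-two anti-automorphism, and $d^{\alpha,\beta}\leftrightarrow F_{\alpha,\beta}+J$. First I would define a linear functional $\phi$ on $K$ by $\phi(F_{a,b})=\chi(a)\delta_{2a,0}$ and set $\langle x,y\rangle=\phi(x\cdot y)$. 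The single elementary fact used throughout is that on the locus $2a=0$ one has $a=-a$, hence $\chi(a)=\chi(-a)$. From this I would check that $\phi$ is a trace, i.e. $\phi(xy)=\phi(yx)$, so that $\langle\cdot,\cdot\rangle$ is symmetric and invariant on $(K,[\cdot,\cdot])$ (invariance of a form $x,y\mapsto\phi(xy)$ attached to a trace on an associative algebra is automatic), and that $\phi\circ\theta=\phi$, so that $\langle\cdot,\cdot\rangle$ is preserved by the order-two Lie automorphism $-\theta$.

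With these in hand, Lemma \ref{lcovariant-Lie-algebra}, applied to the finite group $G=\langle-\theta\rangle$ (for which the finiteness hypotheses are trivially satisfied), produces a symmetric invariant form $\langle\cdot,\cdot\rangle_{G}$ on $\g_{S}=K/G$. It then remains to match $\langle\cdot,\cdot\rangle_{G}$ with (\ref{eform-gS}): since $\langle F_{\alpha,\beta},F_{\mu,\nu}\rangle_{G}=\phi(F_{\alpha,\beta}\cdot F_{\mu,\nu})-\phi(F_{-\alpha,\beta}\cdot F_{\mu,\nu})$, a short computation using the associative product together with the identity $2a=0\Rightarrow a=-a$ reproduces exactly the two terms of (\ref{eform-gS}). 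This single identification shows at once that (\ref{eform-gS}) is well defined on $\g_{S}$ (compatible with $d^{-\alpha,\beta}=-d^{\alpha,\beta}$), symmetric, and invariant.

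The remaining assertions are direct. By Lemma \ref{GSautomorphism} the $S$-action sends $d^{\alpha,\beta}\mapsto d^{\alpha,\beta+\gamma}$, and since the right-hand side of (\ref{eform-gS}) depends on the second indices only through the difference $\beta-\nu$, which is unchanged under $(\beta,\nu)\mapsto(\beta+\gamma,\nu+\gamma)$, the form is $S$-invariant. Finally, substituting $\beta+\gamma$ for $\beta$ in (\ref{eq:2.24}) and in (\ref{eform-gS}), each Kronecker delta occurring pins $\gamma$ to a single element of $S$; hence $[d^{\alpha,\beta+\gamma},d^{\mu,\nu}]$ vanishes for all but at most four values of $\gamma$, and $\langle d^{\alpha,\beta+\gamma},d^{\mu,\nu}\rangle_{\chi}$ for all but at most two, which gives the required finiteness. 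The only genuinely nontrivial point is the invariance of the form; there the main obstacle is simply to recognize the correct trace functional $\phi$ on $K$ and to verify that its covariant descent reproduces (\ref{eform-gS}), which is precisely where the identity $2a=0\Rightarrow\chi(a)=\chi(-a)$ carries the load.
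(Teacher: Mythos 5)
Your proposal is correct and takes essentially the same route as the paper: the paper likewise descends the form from the Lie algebra $(K,[\cdot,\cdot])$ of Proposition \ref{Kalgebra} using the functional $f(F_{\alpha,\beta})=\chi(\alpha)\delta_{2\alpha,0}$ (your $\phi$) together with Lemma \ref{lcovariant-Lie-algebra}, and identifies the $\langle-\theta\rangle$-covariant form with (\ref{eform-gS}) by the same two-term computation. The only cosmetic difference is that the paper checks symmetry directly from the formula and gets associativity for free from $\langle x,y\rangle=f(x\cdot y)$, whereas you package both as the trace property of $\phi$, and you spell out the $S$-invariance and the finiteness claims that the paper dismisses as clear.
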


\begin{proof} We shall use Lemma \ref{lcovariant-Lie-algebra}.
First, recall the associative algebra $K$ from the proof of Proposition \ref{Kalgebra}, where $K$ has a basis
$\{ F_{\alpha,\beta}\ | \  \alpha,\beta\in S\}$ with
\begin{eqnarray*}
F_{\alpha,\beta}\cdot F_{\mu,\nu} =\delta_{\nu, \alpha+\beta+\mu}F_{\alpha+\mu,\beta+\mu}
\end{eqnarray*}
for $\alpha,\beta,\mu,\nu\in S$.
We define a bilinear form $\<\cdot,\cdot\>$ on $K$ by
\begin{eqnarray*}
\langle F_{\alpha,\beta},F_{\mu,\nu}\rangle=
\chi(\alpha+\mu)\delta_{2(\alpha+\mu),0}\delta_{\alpha+\mu,\beta-\nu}
\end{eqnarray*}
for $\alpha,\beta,\mu,\nu\in S$. It can be readily seen that $\<\cdot,\cdot\>$ is symmetric.
On the other hand, $\<\cdot,\cdot\>$ is also associative because
$$\langle F_{\alpha,\beta},F_{\mu,\nu}\rangle=f\left(F_{\alpha,\beta}\cdot F_{\mu,\nu}\right)$$
for $\alpha,\beta,\mu,\nu\in S$, where $f$ is the linear functional on $K$ given by
$$f(F_{\alpha,\beta})=\chi(\alpha)\delta_{2\alpha,0}\   \   \   \mbox{ for }\alpha,\beta \in S.$$
Consequently, $\<\cdot,\cdot\>$ is a
symmetric and invariant bilinear form on $K$ viewed as a Lie algebra.
It is also clear that the Lie algebra automorphism $-\theta$ of $K$ preserves $\<\cdot,\cdot\>$.
For $\alpha,\beta,\mu,\nu\in S$, we have
\begin{eqnarray*}
\langle F_{\alpha,\beta},F_{\mu,\nu}\rangle_{\<-\theta\>}
&=&\<F_{\alpha,\beta},F_{\mu,\nu}\>- \<F_{-\alpha,\beta},F_{\mu,\nu}\> \\
&=&
\chi(\alpha+\mu)\delta_{2(\alpha+\mu),0}\delta_{\alpha+\mu,\beta-\nu}
-\chi(\alpha-\mu)\delta_{2(\alpha-\mu),0}\delta_{\alpha-\mu,\beta-\nu}.
\end{eqnarray*}
Then the first part of this lemma follows immediately from Lemma \ref{lcovariant-Lie-algebra}. The second part is clear.
\end{proof}

\br{rS0-form}
{\em Note that in case that $S^{0}=\{0\}$ (where $S^0=\{ \alpha\in S\ |\ 2\alpha=0\}$), we have
\begin{eqnarray}\label{bilinear-form-gs}
\langle d^{\alpha,\beta},d^{\mu,\nu}\rangle_{\chi}=
\left(\delta_{\alpha,-\mu}-\delta_{\alpha,\mu}\right)\delta_{\beta,\nu}
\end{eqnarray}
for $\alpha,\beta,\mu,\nu\in S$, recalling (\ref{eform-gS}). }
\er

Let $(S,\chi)$ be given as in the definition of $D_{S}$. We have a Lie algebra $\g_{S}$
with a symmetric invariant bilinear form $\<\cdot,\cdot\>_{\chi}$.
Associated to the pair $(\g_{S},\<\cdot,\cdot\>_{\chi})$, we have an affine Lie algebra
$$\widehat{\g_{S}}=\g_{S}\otimes \C[t,t^{-1}]\oplus \C {\bf k}.$$
Naturally, $S$ acts on $\widehat{\g_{S}}$ as an automorphism group, where
\begin{eqnarray}
\gamma\cdot {\bf k}={\bf k},\    \    \    \
\gamma\cdot (d^{\alpha,\beta}(n))=\chi(\gamma)^{n}d^{\alpha,\beta+\gamma}(n)
\end{eqnarray}
for $\gamma,\alpha,\beta\in S,\ n\in \Z$.

Now, we are in a position to present our first main result.

\bt{pisomorphism}
The linear map $\pi:  \widehat{\g_{S}}\rightarrow D_{S}$, defined by $\pi ({\bf k})={\bf c}$ and
 $$\pi (d^{\alpha,\beta}(x))=D^{\alpha,\beta}(x)=\chi(\beta)\tilde{D}^{\alpha}(\chi(\beta)x)\   \   \mbox{ for }\alpha,\beta\in S,$$
gives rise to a Lie algebra  isomorphism from the covariant algebra $\widehat{\g_{S}}/S$ to $D_{S}$.
\et

\begin{proof} Recall the defining relations (\ref{eq:2.19}) and (\ref{eq:2.20}) of $D_{S}$.
As the relations $d^{-\alpha,\beta}=-d^{\alpha,\beta}$  for $\alpha,\beta\in S$ hold in $\g_{S}$, the linear map $\pi$ is well defined.
On the other hand,
note that $\widehat{\g_{S}}/S$ as a vector space  is the quotient of $\widehat{\g_{S}}$ by
the relations
$$d^{\alpha,\beta+\gamma}(x)=\chi(\gamma)d^{\alpha,\beta}(\chi(\gamma)x)\   \    \   \mbox{ for }\alpha,\beta,\gamma\in S.$$
Then  $\pi$ is a linear isomorphism.
 Let $\alpha,\beta,\mu,\nu\in S,\ m,n\in \Z$. We have
\begin{eqnarray*}
&&[d^{\alpha,\beta}(m),d^{\mu,\nu}(n)]_{S}\nonumber\\
&=&\sum_{\gamma\in S}\chi(\gamma)^{m}[d^{\alpha,\beta+\gamma}(m),d^{\mu,\nu}(n)]\nonumber\\
&=&\sum_{\gamma\in S}\chi(\gamma)^{m} (\delta_{\alpha+\mu,\nu-\beta-\gamma}d^{\alpha+\mu,-\alpha+\nu}(m+n)
   -\delta_{\alpha+\mu,\beta+\gamma -\nu}d^{\alpha+\mu,\alpha+\nu}(m+n)
                         \nonumber\\
  &&\  \  \   \   -\delta_{\alpha-\mu,\nu-\beta-\gamma}d^{\alpha-\mu,\nu -\alpha}(m+n)
   +\delta_{\alpha-\mu,\beta+\gamma-\nu}d^{\alpha-\mu,\alpha+\nu}(m+n))\nonumber\\
   &&+m\delta_{m+n,0}{\bf k}\sum_{\gamma\in S}\chi(\gamma)^{m}\left(\chi(\alpha+\mu)\delta_{2(\alpha+\mu),0}\delta_{\alpha+\mu,\beta+\gamma-\nu}
-\chi(\alpha-\mu)\delta_{2(\alpha-\mu),0}\delta_{\alpha-\mu,\beta+\gamma-\nu}\right)\nonumber\\
&=&   \chi(\nu-\alpha-\beta-\mu)^{m}d^{\alpha+\mu,-\alpha+\nu}(m+n)
   -\chi(\alpha+\mu+\nu-\beta)^{m}d^{\alpha+\mu,\alpha+\nu}(m+n)
                         \nonumber\\
  &&-\chi(\mu+\nu-\alpha-\beta)^{m}d^{\alpha-\mu,\nu -\alpha}(m+n)
   +\chi(\alpha+\nu-\beta-\mu)^{m}d^{\alpha-\mu,\alpha+\nu}(m+n)\nonumber\\
   &&+m\delta_{m+n,0}{\bf k}\left(\chi(\alpha+\mu)\delta_{2(\alpha+\mu),0}\chi(\alpha+\mu+\nu-\beta)^{m}
-\chi(\alpha-\mu)\delta_{2(\alpha-\mu),0}\chi(\alpha+\nu-\beta-\mu)^{m}\right).
\end{eqnarray*}
Comparing this with (\ref{eq:2.22}), we see that $\pi$ is a Lie algebra homomorphism, and
hence it is an isomorphism.
\end{proof}


Recall the following notion of quasi module for a vertex algebra from  \cite{Li1}:

\bd{quasi-module}
{\em Let $V$ be a vertex algebra. A {\em quasi $V$-module} is a vector space
$W$ equipped with a linear map
\begin{eqnarray*}
Y_{W}(\cdot,x): & V\rightarrow & \mathrm{Hom}(W,W((x)))\subset (\End W)[[x,x^{-1}]],\\
&v\mapsto & Y_{W}(v,x)
\end{eqnarray*}
satisfying the conditions that
                   $$ Y_{W}({\bf 1},x) = {\bf 1}_{W}\ \ (\mbox{the identity operator on }W)$$
                    and  that for any $u,v\in V$, there exists a nonzero polynomial $p(x_{1},x_{2})$
                    such that
 \begin{eqnarray}
&&{}x_{0}^{-1}\delta\left(\frac{x_{1}-x_{2}}{x_{0}}\right)p(x_{1},x_{2})Y_{W}(u,x_{1})Y_{W}(v,x_{2})
                    \nonumber\\
&&\hspace{1cm} -x_{0}^{-1}\delta\left(\frac{x_{2}-x_{1}}{-x_{0}}\right)p(x_{1},x_{2})Y_{W}(v,x_{2})Y_{W}(u,x_{1})
                                                               \nonumber\\
 &= & x_{2}^{-1}\delta\left(\frac{x_{1}-x_{0}}{x_{2}}\right)p(x_{1},x_{2})Y_{W}(Y(u,x_{0})v,x_{2}). \nonumber
\end{eqnarray}}
\ed

We also recall the definitions of a $\Gamma$-vertex algebra and an equivariant quasi module from \cite{li-gamma-twisted}.

\bd{def.2.6}
{\em Let $\Gamma$ be a group.
A {\em $\Gamma$-vertex algebra} is a vertex algebra $V$ equipped with group homomorphisms
                $$ R: \Gamma \rightarrow {\mathrm{GL}}(V); \ g \longmapsto R_{g}, \  \mbox{and}\
                 \chi: \Gamma \rightarrow \mathbb{C}^{\times}$$
                 such that $R_{g}(\textbf{1}) = \textbf{1}$ for $g\in\Gamma$
                 and
                 $$R_{g}Y(v,x)R_{g}^{-1} = Y(R_{g}(v), \chi(g)^{-1}x) \;\;\mbox{for}\; g\in\Gamma, \  v\in V.$$}
\ed

Note that this is a reformulation of the notion of $\Gamma$-vertex algebra defined in \cite{Li1}.

\bd{equivariant-module}
{\em Let $V$ be a $\Gamma$-vertex algebra. An {\em equivariant quasi $V$-module} is a quasi module $(W, Y_{W})$
                for $V$ viewed as a vertex algebra, satisfying the condition that
                $$Y_{W}(R_{g}(v),x) = Y_{W}(v, \chi(g)x) \;\;\mbox{for}\; g\in\Gamma,\  v\in V$$
                and for $u,v\in V$, there exist $\alpha_{1},\ldots,
                \alpha_{k}\in\chi(\Gamma)\subset\mathbb{C}^{\times}$ such that
                $$(x_{1}-\alpha_{1}x_{2})\cdots(x_{1}-\alpha_{k}x_{2})[Y_{W}(u,x_{1}), Y_{W}(v,x_{2})] = 0.$$}
\ed

\br{rgradedva-gammava}
{\em Let $V$ be a {\em $\Z$-graded vertex algebra} in the sense that $V$ is a vertex algebra
equipped with a $\Z$-grading $V=\oplus_{n\in \Z}V_{(n)}$ such that ${\bf 1}\in V_{(0)}$ and
$$u_{m}V_{(n)}\subset V_{(k+n-m-1)}\ \ \  \mbox{ for }u\in V_{(k)},\ k,m,n\in \Z.$$
Denote by $L(0)$ the linear operator on $V$, defined by $L(0)|_{V_{(n)}}=n$ for $n\in \Z$.
Define an {\em automorphism} of a $\Z$-graded vertex algebra $V$ to be an automorphism
of vertex algebra $V$, which  preserves the $\Z$-grading.
Let $\Gamma$ be an automorphism group of a $\Z$-graded vertex algebra $V$ and
let $\chi: \Gamma \rightarrow \mathbb{C}^{\times}$ be a group homomorphism.
Then  $V$ becomes a $\Gamma$-vertex algebra
with $R_{g}=\chi(g)^{-L(0)}g$ for $g\in \Gamma$ (see \cite{Li1}).}
\er

Let $\ell$ be a complex number, which is fixed for the rest of this section.
Given the affine Lie algebra $\widehat{\g_{S}}$ and the complex number $\ell$,
we have a $\Z$-graded vertex algebra $V_{\widehat{\g_{S}}}(\ell,0)$ (cf. \cite{FZ}, \cite{LL}).
As a vector space,
   $$ V_{\widehat{\g_{S}}} (\ell,0)=
U(\widehat{\g_{S}})\otimes_{U(\g_{S}\otimes \C[t]+\C {\bf k})}\mathbb{C}_{\ell},   $$
where $\mathbb{C}_{\ell}$ denotes the one-dimensional $(\g_{S}\otimes \C[t]+\C {\bf k})$-module
$\C$ with $\g_{S}\otimes \C[t]$
   acting trivially and with ${\bf k}$ acting as scalar $\ell$, and that $\textbf{1} = 1\otimes 1$,
$Y(d^{\alpha,\beta},x) = d^{\alpha,\beta}(x)$ for $\alpha, \beta\in S$.
Furthermore,
$V_{\widehat{\g_{S}}}(\ell,0)_{(1)}=\g_{S}$
is a generating subspace of $V_{\widehat{\g_{S}}}(\ell,0)$, where
$\g_{S}$ is identified as a subspace of $V_{\widehat{\g_{S}}}(\ell,0)$ through the linear map
$a\mapsto a(-1)\textbf{1}.$

Using a straightforward argument we have:

\begin{lem}\label{automorphism1}
For each $\gamma\in S$, there exists an automorphism $\sigma_{\gamma}$
of the $\Z$-graded vertex algebra $ V_{\widehat{\g_{S}}}(\ell,0)$, which is uniquely determined by
$\sigma_{\gamma}(d^{\alpha,\beta})=d^{\alpha,\beta+\gamma}$ for $\alpha,\beta\in S$.
\end{lem}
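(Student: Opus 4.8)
The plan is to construct the automorphism $\sigma_\gamma$ directly on the underlying affine Lie algebra $\widehat{\g_S}$ and then lift it to the vertex algebra $V_{\widehat{\g_S}}(\ell,0)$ via the universal property of these vacuum-type modules. The crucial preliminary observation is that the prescription $d^{\alpha,\beta}\mapsto d^{\alpha,\beta+\gamma}$ is, by Lemma \ref{GSautomorphism}, exactly the automorphism action $\gamma\cdot$ of $S$ on the Lie algebra $\g_S$, and by the preceding lemma this action preserves the bilinear form $\<\cdot,\cdot\>_\chi$. Therefore it extends canonically to an automorphism $\hat\sigma_\gamma$ of the affine Lie algebra $\widehat{\g_S}=\g_S\otimes\C[t,t^{-1}]\oplus\C{\bf k}$ by setting $\hat\sigma_\gamma(a(n))=(\gamma\cdot a)(n)$ for $a\in\g_S,\ n\in\Z$ and $\hat\sigma_\gamma({\bf k})={\bf k}$; invariance of the form guarantees the central extension is respected.

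Next I would transport $\hat\sigma_\gamma$ to the vertex algebra. Since $\hat\sigma_\gamma$ fixes ${\bf k}$ (hence preserves the level $\ell$) and preserves the subalgebra $\g_S\otimes\C[t]+\C{\bf k}$ together with its action on $\C_\ell$, it induces a linear automorphism of
$$V_{\widehat{\g_S}}(\ell,0)=U(\widehat{\g_S})\otimes_{U(\g_S\otimes\C[t]+\C{\bf k})}\C_\ell$$
fixing the vacuum ${\bf 1}$. To see this is a vertex algebra automorphism, I would verify that on the generating subspace $\g_S=V_{\widehat{\g_S}}(\ell,0)_{(1)}$ it intertwines the vertex operators, i.e. $\sigma_\gamma Y(d^{\alpha,\beta},x)\sigma_\gamma^{-1}=Y(\sigma_\gamma(d^{\alpha,\beta}),x)=Y(d^{\alpha,\beta+\gamma},x)$. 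This follows because $Y(d^{\alpha,\beta},x)=d^{\alpha,\beta}(x)=\sum_n d^{\alpha,\beta}(n)x^{-n-1}$ and $\sigma_\gamma$ acts on the modes exactly as $\hat\sigma_\gamma$. Since $\g_S$ generates $V_{\widehat{\g_S}}(\ell,0)$ as a vertex algebra and $\sigma_\gamma$ fixes ${\bf 1}$, the intertwining property propagates to all of $V_{\widehat{\g_S}}(\ell,0)$ by the standard fact that a linear map fixing the vacuum and commuting with the vertex operators on a generating set is a vertex algebra homomorphism.

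Finally I would record that $\sigma_\gamma$ preserves the $\Z$-grading, since $\hat\sigma_\gamma$ preserves the degree (it sends $d^{\alpha,\beta}(n)$ to $d^{\alpha,\beta+\gamma}(n)$, of the same weight), so $\sigma_\gamma$ is an automorphism of the $\Z$-graded vertex algebra in the sense of Remark \ref{rgradedva-gammava}. Uniqueness is immediate: any vertex algebra automorphism is determined by its restriction to a generating subspace, and $\g_S=V_{\widehat{\g_S}}(\ell,0)_{(1)}$ generates. The main obstacle, such as it is, is purely bookkeeping: confirming that the $S$-action preserves $\<\cdot,\cdot\>_\chi$ so that $\hat\sigma_\gamma$ is a genuine automorphism of $\widehat{\g_S}$ rather than merely of $\g_S\otimes\C[t,t^{-1}]$; but this was already established in the lemma preceding Remark \ref{rS0-form}, so the argument reduces to assembling these pieces, which is why the statement is prefaced as following from a ``straightforward argument.''
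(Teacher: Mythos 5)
Your proposal is correct and is precisely the ``straightforward argument'' the paper alludes to but omits: lift the $S$-action of Lemma \ref{GSautomorphism} (form-preserving by the lemma preceding Remark \ref{rS0-form}) to $\widehat{\g_{S}}$ fixing ${\bf k}$, induce it on the vacuum module, verify the intertwining with $Y(\cdot,x)$ on the generating subspace $\g_{S}=V_{\widehat{\g_{S}}}(\ell,0)_{(1)}$, and get uniqueness from generation. No gaps; your bookkeeping (preservation of $\g_S\otimes\C[t]+\C{\bf k}$, of the action on $\C_\ell$, and of the $\Z$-grading) covers exactly the points that need checking.
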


Set
\begin{eqnarray}
\Gamma=S\subset
{\mathrm{Aut}}(V_{\widehat{\g_{S}}}(\ell,0)), \label{eq:2.51}
\end{eqnarray}
where $V_{\widehat{\g_{S}}}(\ell,0)$ is viewed as a $\Z$-graded vertex algebra.
Define a group homomorphism
$$R: \  \Gamma \rightarrow \ {\mathrm{GL}}(V_{\widehat{\g_{S}}}(\ell,0))$$
 by
$$ R_{\gamma} =\chi(\gamma)^{-L(0)}\gamma
  \   \  \mbox{for}\;\; \gamma \in S,$$
 where $L(0)$ is the linear operator on $V_{\widehat{\g_{S}}}(\ell,0)$ defined by $L(0)v=nv$
 for $v\in V_{\widehat{\g_{S}}}(\ell,0)_{(n)}$ with $n\in \mathbb{Z}$.
In view of Remark \ref{rgradedva-gammava},  equipped with group homomorphisms $R$ and $\chi$,
the $\Z$-graded vertex algebra $V_{\widehat{\g_{S}}}(\ell,0)$ becomes a $\Gamma$-vertex algebra.

\bd{restricted}
{\em A $D_{S}$-module $W$ is said to be {\em restricted} if
                    for every $\alpha\in S$ and $w\in W,$ $D^{\alpha}(n)w = 0 $
                    for $n$ sufficiently large, or equivalently, if
                    $D^{\alpha}(x)\in \Hom (W,W((x)))$, or equivalently, if
                    $\tilde{D}^{\alpha}(x)\in  \Hom (W,W((x)))$ for all $\alpha\in S$.
We say a $D_{S}$-module W is of {\em level} $\ell\in \C$
                if the central element ${\bf c}$ acts as scalar $\ell.$}
\ed

As the second main result of this section we have:

\begin{thm}\label{quasi-module-1}
Let $W$ be a restricted $D_S$-module of level $\ell$. Then there exists an equivariant quasi
                 $V_{\widehat{\g_{S}}}(\ell,0)$-module
                 structure $Y_{W}(\cdot,x)$ on $W$, which is uniquely determined by
                 $$Y_{W}(d^{\alpha,\beta},x) = D^{\alpha,\beta}(x)
                 =\chi(\beta)\tilde{D}^{\alpha}(\chi(\beta)x)\  \
                 \mbox{ for }\alpha,\beta\in S.$$
On the other hand, let $(W,Y_{W})$ be an equivariant quasi $V_{\widehat{\g_{S}}}(\ell,0)$-module.
Then  $W$ is a restricted $D_{S}$-module of level $\ell$
                  with $\tilde{D}^{\alpha}(x)=Y_{W}(d^{\alpha,0},x)$
                  for $\alpha\in S.$
\end{thm}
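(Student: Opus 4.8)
The plan is to realize this equivalence as an instance of the general correspondence between equivariant quasi modules for an affine $\Gamma$-vertex algebra and restricted modules for the associated $\Gamma$-covariant Lie algebra, exploiting the identification $D_{S}\cong \widehat{\g_{S}}/S$ furnished by Theorem \ref{pisomorphism}. Since $V_{\widehat{\g_{S}}}(\ell,0)$ is generated as a vertex algebra by its weight-one subspace $\g_{S}$, it suffices to work at the level of the generating fields $Y_{W}(d^{\alpha,\beta},x)$ and to invoke the standard construction-and-uniqueness theorem of \cite{li-gamma-twisted} (cf. \cite{Li1}) for (equivariant quasi) modules over a vertex algebra generated by a space of mutually quasi-local, equivariant fields.

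For the forward direction, given a restricted $D_{S}$-module $W$ of level $\ell$, I would set $Y_{W}(d^{\alpha,\beta},x)=D^{\alpha,\beta}(x)=\chi(\beta)\tilde{D}^{\alpha}(\chi(\beta)x)$. Restrictedness guarantees $\tilde{D}^{\alpha}(x)\in\Hom(W,W((x)))$, so each $Y_{W}(d^{\alpha,\beta},x)$ is a genuine field. Three things then need checking. First, consistency with the relations of $\g_{S}$, in particular $d^{-\alpha,\beta}=-d^{\alpha,\beta}$, which holds because $\tilde{D}^{-\alpha}(x)=-\tilde{D}^{\alpha}(x)$ by Lemma \ref{tildeD-characterization}. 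Second, the equivariance condition $Y_{W}(R_{\gamma}v,x)=Y_{W}(v,\chi(\gamma)x)$: since $d^{\alpha,\beta}$ has weight one and $R_{\gamma}=\chi(\gamma)^{-L(0)}\gamma$, one has $R_{\gamma}d^{\alpha,\beta}=\chi(\gamma)^{-1}d^{\alpha,\beta+\gamma}$, and the identity reduces to the multiplicativity $\chi(\beta+\gamma)=\chi(\beta)\chi(\gamma)$. Third, quasi-locality: the commutator formula (\ref{eq:2.22}) presents $[D^{\alpha,\beta}(x_{1}),D^{\mu,\nu}(x_{2})]$ as a finite sum of delta-function terms supported at points of $\chi(S)$, so multiplying by a suitable product $\prod_{i}(x_{1}-\alpha_{i}x_{2})$ with $\alpha_{i}\in\chi(S)$ annihilates the bracket. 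With these verified, the existence theorem produces the unique equivariant quasi $V_{\widehat{\g_{S}}}(\ell,0)$-module structure extending the given fields.

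For the converse, given an equivariant quasi $V_{\widehat{\g_{S}}}(\ell,0)$-module $(W,Y_{W})$, I would set $\tilde{D}^{\alpha}(x)=Y_{W}(d^{\alpha,0},x)$ and first recover the full family: applying the equivariance condition to $v=d^{\alpha,0}$ and using $R_{\gamma}d^{\alpha,0}=\chi(\gamma)^{-1}d^{\alpha,\gamma}$ yields $Y_{W}(d^{\alpha,\gamma},x)=\chi(\gamma)\tilde{D}^{\alpha}(\chi(\gamma)x)=D^{\alpha,\gamma}(x)$. The relation $\tilde{D}^{-\alpha}(x)=-\tilde{D}^{\alpha}(x)$ is inherited from $d^{-\alpha,0}=-d^{\alpha,0}$, and the bracket relation (\ref{eq:2.20}), equivalently (\ref{eq:2.22}) via Proposition \ref{psecond-def-D}, is obtained by specializing the vertex-algebra commutator formula of Definition \ref{quasi-module} to the generators, reading off the Lie bracket of $\g_{S}$ together with the level-$\ell$ contribution from $\<\cdot,\cdot\>_{\chi}$. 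By Lemma \ref{tildeD-characterization} these are precisely the defining relations (\ref{eq:2.19})--(\ref{eq:2.20}) of $D_{S}$ at level $\ell$, so $W$ is a restricted $D_{S}$-module.

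I expect the main obstacle to be matching the delta-function shape of the (quasi) commutator formula in Definition \ref{quasi-module} with the explicit relations (\ref{eq:2.22}), including the derivative-of-delta central terms, and checking that the nonzero polynomial $p(x_{1},x_{2})$ and the scalars $\alpha_{i}\in\chi(S)$ can be chosen uniformly enough for the general construction theorem to apply, while tracking the substitution $x\mapsto\chi(\beta)x$ correctly through the equivariance identity. Once the generating fields are shown to be equivariant and quasi-local, the extension to all of $V_{\widehat{\g_{S}}}(\ell,0)$ and the uniqueness are automatic from the generating property $V_{\widehat{\g_{S}}}(\ell,0)_{(1)}=\g_{S}$.
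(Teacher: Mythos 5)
Your proposal is correct and takes essentially the same route as the paper: the paper's proof simply combines Theorem \ref{pisomorphism} (the identification $D_{S}\simeq\widehat{\g_{S}}/S$) with the injectivity of $\chi$ and then invokes Theorem 4.9 of \cite{li-gamma-twisted}, which is exactly the general correspondence you appeal to. The verifications you spell out (equivariance of the generating fields, quasi-locality via (\ref{eq:2.22}), and recovery of the defining relations in the converse) are precisely the content packaged in that cited theorem, so your extra detail only makes explicit what the paper leaves to the reference.
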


\begin{proof} Recall from Theorem \ref{pisomorphism} that $D_{S}$ is isomorphic to
the covariant Lie algebra $\widehat{\g_{S}}/S$ of the affine Lie algebra $\widehat{\g_{S}}$.
Note that $\chi: S\rightarrow \C^{\times}$ is assumed to be injective. Then all the assertions follow immediately from
Theorem 4.9 of \cite{li-gamma-twisted}.
\end{proof}

\section{Identifying Lie algebra $D_{S}$ with affine Kac-Moody algebras}
In this section,  we relate Lie algebra $D_{S}$ to affine Kac-Moody algebras.
To do this, we determine the Lie algebra $\g_{S}$ introduced in Section 2.
Especially, we show that if $S$ is a finite abelian group of order $2l+1$, then
$\g_{S}$ is a finite-dimensional simple Lie algebra of type $B_{l}$ and
Lie algebra $D_{S}$ is isomorphic to the affine Kac-Moody algebra of type $B_{l}^{(1)}$.
We also obtain some results for $S$ of an even order.

Let $S$ be an abelian group as before, which is fixed throughout this section.  Define
$\gl_{S}$ to be the associative algebra with generators $E_{\alpha,\beta}$ for $\alpha,\beta\in S$, subject to relations
\begin{eqnarray}
E_{\alpha,\beta}\cdot E_{\mu,\nu}=\delta_{\beta,\mu}E_{\alpha,\nu}\   \   \  \mbox{ for }\alpha,\beta,\mu,\nu\in S.
\end{eqnarray}
It is straightforward to see that $E_{\alpha,\beta}$ for $\alpha,\beta\in S$ form a basis of $\gl_{S}$.
Notice that if $S$ is of order $n$, then $\gl_{S}$ is isomorphic to the algebra $M(n,\C)$ of  $n\times n$ complex matrices.
Naturally, $\gl_{S}$ is a Lie algebra with
$$[E_{\alpha,\beta}, E_{\mu,\nu}]=E_{\alpha,\beta}\cdot E_{\mu,\nu}-E_{\mu,\nu}\cdot E_{\alpha,\beta}=\delta_{\beta,\mu}E_{\alpha,\nu}-\delta_{\nu,\alpha}E_{\mu,\beta}.$$
Equip $\gl_{S}$ with the bilinear form $\langle \cdot,\cdot\rangle$ defined by
\begin{eqnarray}\label{eBform}
\langle E_{\alpha,\beta},E_{\mu,\nu}\rangle =\frac{1}{2}\delta_{\alpha,\nu}\delta_{\beta,\mu}
\end{eqnarray}
for $\alpha,\beta,\mu,\nu\in S$. It is straightforward to show that this form is symmetric, associative (invariant), and nondegenerate.
Define a linear endomorphism $\tau$ of $\gl_{S}$ by
 \begin{eqnarray}\label{etau-def}
 \tau(E_{\alpha,\beta})=-E_{\beta,\alpha}\ \ \  \mbox{ for }\alpha,\beta\in S.
 \end{eqnarray}
Then $\tau$ is an order-$2$ automorphism of $\gl_{S}$ viewed as a Lie algebra  and we see that $\tau$ preserves the bilinear form.

For $\alpha,\beta\in S$, set
\begin{eqnarray}\label{eDef-G}
G_{\alpha,\beta}=E_{\alpha+\beta,\beta-\alpha}.
\end{eqnarray}
We have
$$G_{\alpha,\beta}\cdot G_{\mu,\nu}=\delta_{\beta-\alpha,\mu+\nu}G_{\alpha+\mu,\alpha+\nu}
\   \   \   \   \mbox{for }\alpha,\beta,\mu,\nu\in S.$$
Furthermore, set
\begin{eqnarray}\label{eq:2.25}
\A_{S}=\mbox{span}\{G_{\alpha,\beta}\   |\  \alpha,\beta\in S\}\subset \gl_{S}.
\end{eqnarray}
By a straightforward argument we get the following results:

\begin{lem}
The subspace $\mathcal{A}_{S}$  of $\gl_{S}$ is an associative subalgebra with a basis
\begin{eqnarray}\label{eAS2S}
\{E_{\mu,\nu}\   |\  \mu,\nu\in S\ \mbox{with}\   \mu+\nu\in 2S\}.
\end{eqnarray}
Furthermore, the automorphism $\tau$ of the Lie algebra $\gl_{S}$
 preserves $\mathcal{A}_{S}$, where
 \begin{eqnarray}
 \tau (G_{\alpha,\beta})=-G_{-\alpha,\beta}\  \  \  \mbox{  for }\alpha,\beta\in S.
 \end{eqnarray}
 \end{lem}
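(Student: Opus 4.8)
The plan is to prove the three assertions by moving freely between the $G$-coordinates $G_{\alpha,\beta}=E_{\alpha+\beta,\beta-\alpha}$ and the standard matrix units $E_{\mu,\nu}$, using whichever description is more convenient for each part.

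For the basis statement I would argue by double containment. Since $G_{\alpha,\beta}=E_{\alpha+\beta,\beta-\alpha}$ and $(\alpha+\beta)+(\beta-\alpha)=2\beta\in 2S$, each spanning vector $G_{\alpha,\beta}$ lies in the span of $\{E_{\mu,\nu}\mid \mu+\nu\in 2S\}$; this gives one inclusion. For the reverse inclusion, given a pair $(\mu,\nu)$ with $\mu+\nu\in 2S$, I would solve the system $\alpha+\beta=\mu$, $\beta-\alpha=\nu$ inside $S$: choosing $\beta_{0}\in S$ with $2\beta_{0}=\mu+\nu$ (available precisely because $\mu+\nu\in 2S$) and setting $\alpha_{0}=\beta_{0}-\nu$, one checks $\alpha_{0}+\beta_{0}=2\beta_{0}-\nu=\mu$ and $\beta_{0}-\alpha_{0}=\nu$, so that $E_{\mu,\nu}=G_{\alpha_{0},\beta_{0}}\in\A_{S}$. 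Hence $\A_{S}=\mbox{span}\{E_{\mu,\nu}\mid \mu+\nu\in 2S\}$, and since these matrix units are a subset of the standard basis of $\gl_{S}$ they are automatically linearly independent and therefore form a basis.

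Closure under multiplication I would read off directly from the product formula $G_{\alpha,\beta}\cdot G_{\mu,\nu}=\delta_{\beta-\alpha,\mu+\nu}G_{\alpha+\mu,\alpha+\nu}$ recorded just before the lemma: the product of two basis vectors of $\A_{S}$ is again such a vector or is zero, so $\A_{S}$ is closed, and associativity is inherited from $\gl_{S}$. For the last assertion I would compute in matrix units, using $\tau(E_{\sigma,\rho})=-E_{\rho,\sigma}$: then $\tau(G_{\alpha,\beta})=\tau(E_{\alpha+\beta,\beta-\alpha})=-E_{\beta-\alpha,\alpha+\beta}$, and since $G_{-\alpha,\beta}=E_{\beta-\alpha,\alpha+\beta}$ this is exactly $-G_{-\alpha,\beta}$, which lies in $\A_{S}$; this simultaneously yields the displayed formula and the invariance of $\A_{S}$ under $\tau$.

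I expect the only non-routine point to be the reverse inclusion in the basis argument: because $S$ is merely an abelian group one cannot \emph{divide by} $2$, so it is essential to use the hypothesis $\mu+\nu\in 2S$ to produce $\beta_{0}$ and then verify that the single choice $\alpha_{0}=\beta_{0}-\nu$ is consistent with both equations simultaneously. Everything else is a direct translation between the two coordinate systems.
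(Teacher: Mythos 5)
Your proof is correct and is precisely the ``straightforward argument'' the paper invokes without writing out: translating between the $G_{\alpha,\beta}$ and $E_{\mu,\nu}$ coordinates, using $\mu+\nu\in 2S$ to solve $\alpha+\beta=\mu$, $\beta-\alpha=\nu$, and reading off closure and $\tau$-invariance from the recorded product formula and $\tau(E_{\sigma,\rho})=-E_{\rho,\sigma}$. Nothing is missing; the one point needing care (that one cannot divide by $2$ in $S$, so the hypothesis $\mu+\nu\in 2S$ must be used to produce $\beta_{0}$) is exactly the point you flagged and handled.
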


Recall that $S^{0}=\{ \alpha\in S\ |\  2\alpha=0\}$.  We have:

\begin{lem} \label{lGGequality}
For $\alpha,\beta,\alpha',\beta'\in S$, $G_{\alpha,\beta}=G_{\alpha',\beta'}$
if and only if $\alpha=\alpha'+\gamma,\ \beta=\beta'-\gamma=\beta'+\gamma$ for some $\gamma\in S^0$.
\end{lem}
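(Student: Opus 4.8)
The plan is to reduce the statement to the basis property of the matrix units $E_{\mu,\nu}$ and then to solve a small linear system inside the abelian group $S$. By definition $G_{\alpha,\beta}=E_{\alpha+\beta,\beta-\alpha}$, and since $\{E_{\mu,\nu}\mid \mu,\nu\in S\}$ is a basis of $\gl_{S}$, the equality $G_{\alpha,\beta}=G_{\alpha',\beta'}$ is equivalent to the pair of conditions
$$\alpha+\beta=\alpha'+\beta'\quad\text{and}\quad \beta-\alpha=\beta'-\alpha'.$$
Thus it suffices to show that this pair of equations is equivalent to the existence of $\gamma\in S^{0}$ with $\alpha=\alpha'+\gamma$ and $\beta=\beta'-\gamma=\beta'+\gamma$.

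For the forward direction, I would assume the two displayed equations and set $\gamma=\alpha-\alpha'$. The first equation gives $\beta-\beta'=\alpha'-\alpha=-\gamma$, while the second gives $\beta-\beta'=\alpha-\alpha'=\gamma$; comparing these forces $\gamma=-\gamma$, that is $2\gamma=0$, so $\gamma\in S^{0}$. Then $\alpha=\alpha'+\gamma$ and $\beta=\beta'-\gamma$, and since $\gamma\in S^{0}$ we automatically have $\beta'-\gamma=\beta'+\gamma$, which is exactly the asserted form.

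For the converse, I would take $\gamma\in S^{0}$ with $\alpha=\alpha'+\gamma$ and $\beta=\beta'-\gamma$, and use $2\gamma=0$ to compute directly
$$\alpha+\beta=(\alpha'+\gamma)+(\beta'-\gamma)=\alpha'+\beta',\qquad \beta-\alpha=(\beta'-\gamma)-(\alpha'+\gamma)=\beta'-\alpha'-2\gamma=\beta'-\alpha'.$$
These are precisely the two index conditions, so $E_{\alpha+\beta,\beta-\alpha}=E_{\alpha'+\beta',\beta'-\alpha'}$ and hence $G_{\alpha,\beta}=G_{\alpha',\beta'}$.

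There is no substantial obstacle: the whole argument is a direct consequence of the linear independence of the matrix units together with the defining relation $G_{\alpha,\beta}=E_{\alpha+\beta,\beta-\alpha}$. The only point that deserves a moment's care is noticing that $2\gamma=0$ is \emph{forced} by (not merely compatible with) the equality of indices, which is exactly what pins $\gamma$ down to the subgroup $S^{0}$.
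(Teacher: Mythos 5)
Your proof is correct and takes exactly the route the paper intends: the paper's own proof is the one-line observation that the claim is straightforward because the $E_{\mu,\nu}$ ($\mu,\nu\in S$) form a basis of $\gl_{S}$, and your argument simply fills in that routine computation. Reducing the equality $G_{\alpha,\beta}=G_{\alpha',\beta'}$ to the index equations $\alpha+\beta=\alpha'+\beta'$ and $\beta-\alpha=\beta'-\alpha'$, then setting $\gamma=\alpha-\alpha'$ and noting the two equations force $2\gamma=0$, is precisely the intended verification, including the correct observation that $\gamma\in S^{0}$ is forced rather than merely assumed.
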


\begin{proof}
It is straightforward as $E_{\mu,\nu}$ for $\mu,\nu\in S$ form a basis of $\gl_{S}$.
\end{proof}

 Set
\begin{eqnarray}
 {\mathcal{A}}_{S}^{\tau}=\{ a\in {\mathcal{A}}_{S}\ |\  \tau(a)=a\},
\end{eqnarray}
a Lie subalgebra of $\mathcal{A}_{S}$.
For $\alpha,\beta\in S$, set
 \begin{eqnarray}\label{eq:2.27}
 G^{\tau}_{\alpha,\beta}=G_{\alpha,\beta}+\tau(G_{\alpha,\beta})
 =G_{\alpha,\beta}-G_{-\alpha,\beta}=E_{\alpha+\beta,\beta-\alpha}-E_{\beta-\alpha,\beta+\alpha}\in {\mathcal{A}}^{\tau}_{S}.
 \end{eqnarray}
 Then
 \begin{eqnarray}
 {\mathcal{A}}_{S}^{\tau}=\span\{ G^{\tau}_{\alpha,\beta}\ |\ \alpha,\beta\in S\}.
 \end{eqnarray}
We have
\begin{eqnarray}\label{esymmetry-Gtau}
G^{\tau}_{\alpha+\gamma,\beta+\gamma}=G^{\tau}_{\alpha,\beta}\  \  \mbox{ for }\alpha,\beta\in S,\ \gamma\in S^{0},
\end{eqnarray}
\begin{eqnarray}\label{esymmetry-G}
G^{\tau}_{-\alpha,\beta}=-G^{\tau}_{\alpha,\beta}\  \  \mbox{ for }\alpha,\beta\in S.
\end{eqnarray}
On the other hand, by a straightforward calculation we get
 \begin{eqnarray}\label{eq:2.28}
  [G^{\tau}_{\alpha,\beta},G^{\tau}_{\mu,\nu}] &=
   & \delta_{\alpha+\mu,\beta-\nu}G^{\tau}_{\alpha+\mu,\nu+\alpha}-\delta_{\alpha+\mu,\nu-\beta}G^{\tau}_{\alpha+\mu,\mu+\beta}
                         \nonumber\\
   &&{}+ \delta_{\alpha-\mu,\nu-\beta}G^{\tau}_{\alpha-\mu,\beta-\mu}- \delta_{\alpha-\mu,\beta-\nu}G^{\tau}_{\alpha-\mu,\alpha+\nu}
\end{eqnarray}
for $\alpha,\beta,\mu,\nu\in S$.

\begin{lem}\label{rho-automorphism}
The group $S$ acts on the Lie algebra ${\mathcal{A}}_{S}^{\tau}$
as  an automorphism group  with $\gamma\in S$ acting as $\sigma_{\gamma}$, where
\begin{eqnarray}
\sigma_{\gamma}(G_{\alpha,\beta}^{\tau})=G_{\alpha,\beta+\gamma}^{\tau}
\end{eqnarray}
for $\alpha,\beta\in S$. Furthermore, the action preserves the bilinear form $\<\cdot,\cdot\>$ defined in (\ref{eBform}).
\end{lem}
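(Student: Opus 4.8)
The plan is to avoid checking the well-definedness of $\sigma_{\gamma}$ directly on the generators $G^{\tau}_{\alpha,\beta}$, which satisfy the nontrivial relations (\ref{esymmetry-Gtau}) and (\ref{esymmetry-G}), and instead to realize $\sigma_{\gamma}$ as the restriction of a globally defined automorphism of the ambient associative algebra $\gl_{S}$. Concretely, for $\gamma\in S$ I would define a linear map $\rho_{\gamma}$ on $\gl_{S}$ on the basis $\{E_{\mu,\nu}\}$ by $\rho_{\gamma}(E_{\mu,\nu})=E_{\mu+\gamma,\nu+\gamma}$. Since the $E_{\mu,\nu}$ form a basis, $\rho_{\gamma}$ is manifestly well defined, with inverse $\rho_{-\gamma}$, and clearly $\rho_{\gamma}\rho_{\delta}=\rho_{\gamma+\delta}$, $\rho_{0}=\mathrm{id}$, so $\gamma\mapsto\rho_{\gamma}$ is a group homomorphism from $S$ into $\Aut(\gl_{S})$. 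The first verification is that $\rho_{\gamma}$ is an automorphism of the associative (hence Lie) algebra $\gl_{S}$: this is the one-line check that both $\rho_{\gamma}(E_{\alpha,\beta}\cdot E_{\mu,\nu})$ and $\rho_{\gamma}(E_{\alpha,\beta})\cdot\rho_{\gamma}(E_{\mu,\nu})$ equal $\delta_{\beta,\mu}E_{\alpha+\gamma,\nu+\gamma}$, the Kronecker delta being insensitive to the common shift by $\gamma$.

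Next I would show that $\rho_{\gamma}$ stabilizes $\A_{S}^{\tau}$. Using the basis (\ref{eAS2S}) of $\A_{S}$, the condition $\mu+\nu\in 2S$ forces $(\mu+\gamma)+(\nu+\gamma)=\mu+\nu+2\gamma\in 2S$, so $\rho_{\gamma}(\A_{S})\subseteq\A_{S}$. Moreover $\rho_{\gamma}$ commutes with the involution $\tau$ of (\ref{etau-def}), since $\rho_{\gamma}\tau(E_{\alpha,\beta})=-E_{\beta+\gamma,\alpha+\gamma}=\tau\rho_{\gamma}(E_{\alpha,\beta})$; hence $\rho_{\gamma}$ preserves the fixed-point subalgebra $\A_{S}^{\tau}$, and restricting the bijection $\rho_{\gamma}$ to $\A_{S}^{\tau}$ gives an automorphism. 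Finally, recalling from (\ref{eq:2.27}) that $G^{\tau}_{\alpha,\beta}=E_{\alpha+\beta,\beta-\alpha}-E_{\beta-\alpha,\beta+\alpha}$, a direct substitution gives $\rho_{\gamma}(G^{\tau}_{\alpha,\beta})=G^{\tau}_{\alpha,\beta+\gamma}$, so $\rho_{\gamma}|_{\A_{S}^{\tau}}$ is precisely the map $\sigma_{\gamma}$ of the statement. This single identification simultaneously establishes that $\sigma_{\gamma}$ is well defined, that it is an automorphism of $\A_{S}^{\tau}$, and that $\gamma\mapsto\sigma_{\gamma}$ is a group homomorphism.

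For the invariance of the form I would compute directly from (\ref{eBform}): $\langle\rho_{\gamma}(E_{\alpha,\beta}),\rho_{\gamma}(E_{\mu,\nu})\rangle=\frac{1}{2}\delta_{\alpha+\gamma,\nu+\gamma}\delta_{\beta+\gamma,\mu+\gamma}=\frac{1}{2}\delta_{\alpha,\nu}\delta_{\beta,\mu}=\langle E_{\alpha,\beta},E_{\mu,\nu}\rangle$, again because the shift cancels inside each delta. The only genuinely conceptual point is the well-definedness of $\sigma_{\gamma}$ in the presence of the relations among the $G^{\tau}_{\alpha,\beta}$; the device of passing to the globally defined $\rho_{\gamma}$ on $\gl_{S}$ disposes of it at once, after which every remaining verification is a routine shift-invariance of Kronecker deltas. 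I therefore expect no real obstacle beyond correctly bookkeeping the index shifts in the identity $\rho_{\gamma}(G^{\tau}_{\alpha,\beta})=G^{\tau}_{\alpha,\beta+\gamma}$.
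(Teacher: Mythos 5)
Your proposal is correct and is essentially the paper's own argument: the paper likewise defines the shift $\sigma_{\gamma}(E_{\alpha,\beta})=E_{\alpha+\gamma,\beta+\gamma}$ as an automorphism of all of $\gl_{S}$ preserving $\langle\cdot,\cdot\rangle$, then restricts it to ${\mathcal{A}}_{S}^{\tau}$ via the computation $\sigma_{\gamma}(G^{\tau}_{\alpha,\beta})=G^{\tau}_{\alpha,\beta+\gamma}$. You merely spell out the routine verifications (multiplicativity, stabilization of ${\mathcal{A}}_{S}$, commutation with $\tau$, the delta computation for the form) that the paper labels as clear.
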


\begin{proof} First, $S$ acts on $\gl_{S}$ as an automorphism group by
\begin{eqnarray}\label{new}
\sigma_{\gamma}(E_{\alpha,\beta})=E_{\alpha+\gamma, \beta+\gamma} \   \   \   \mbox{ for }\gamma,\alpha,\beta\in S.
\end{eqnarray}
It is clear that $S$ preserves the bilinear form $\langle\cdot,\cdot\rangle$ on $\gl_{S}$.
Second, for $\alpha,\beta,\gamma\in S$, we have
$$\sigma_{\gamma}(G_{\alpha,\beta})=E_{\alpha+\beta+\gamma,\beta-\alpha+\gamma}=G_{\alpha,\beta+\gamma}$$
 and furthermore, we have
$$\sigma_{\gamma}(G_{\alpha,\beta}^{\tau})=G_{\alpha,\beta+\gamma}-G_{-\alpha,\beta+\gamma}
=G_{\alpha,\beta+\gamma}^{\tau}.$$
Consequently, $S$ acts on ${\mathcal{A}}_{S}^{\tau}$ as an automorphism group.
\end{proof}

Next, we relate the Lie algebra $\g_{S}$ introduced in Section 2 with Lie subalgebra ${\mathcal{A}}_{S}^{\tau}$
of $(\gl_{S})^{\tau}$.
Recall that $S$ also acts on $\g_{S}$ as an automorphism group.
From  relations (\ref{esymmetry-G}) and (\ref{eq:2.28}) and  the definition of $\g_{S}$
we immediately have:

\bl{homomorphism}
There exists a Lie algebra homomorphism $\pi$ from $\g_{S}$
 onto ${\mathcal{A}}_{S}^{\tau}\subset (\gl_{S})^{\tau}$ such that
\begin{eqnarray}
\pi(d^{\alpha,\beta})=G_{-\alpha,\beta}^{\tau}=-G^{\tau}_{\alpha,\beta}=E_{\beta-\alpha,\alpha+\beta}-E_{\alpha+\beta,\beta-\alpha}
\   \   \   \   \mbox{ for }\alpha,\beta\in S.
\end{eqnarray}
Furthermore,  $\pi$ preserves the group actions of $S$.
\el

Furthermore, we have:

\begin{prop} \label{realization}
Assume that $S$ is an abelian group such that $S^{0}=\{0\}$.
Then the Lie algebra homomorphism $\pi$ defined in Lemma \ref{homomorphism} from $\g_{S}$ to ${\mathcal{A}}_{S}^{\tau}$  is an isomorphism, which also preserves the invariant bilinear form.
\end{prop}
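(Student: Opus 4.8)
The plan is to prove the two assertions separately: that $\pi$ is a linear isomorphism, and that it intertwines the two invariant bilinear forms. Since Lemma \ref{homomorphism} already provides that $\pi$ is a \emph{surjective} Lie algebra homomorphism, for the first assertion it suffices to establish injectivity, and my strategy for this is to exhibit an honest basis of $\g_{S}$ and check that $\pi$ carries it to a linearly independent family of matrix combinations in $\gl_{S}$.

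First I would extract such a basis from Proposition \ref{Kalgebra}, which identifies $\g_{S}$ with $K/J$, where $K$ has basis $\{F_{\alpha,\beta}\}$ and $J=\span\{F_{\alpha,\beta}+F_{-\alpha,\beta}\}$, with $d^{\alpha,\beta}$ corresponding to $F_{\alpha,\beta}+J$. Decomposing $K$ according to the second index $\beta$ and noting that $J$ respects this decomposition, the involution $\alpha\mapsto-\alpha$ shows that $J$ is exactly the symmetric part of each summand. Under the hypothesis $S^{0}=\{0\}$ this yields $d^{0,\beta}=0$ together with $d^{-\alpha,\beta}=-d^{\alpha,\beta}$, and shows that $\{d^{\alpha,\beta}\mid \alpha\in S_{+},\ \beta\in S\}$ is a basis of $\g_{S}$, where $S_{+}$ is any set of representatives for the pairs $\{\alpha,-\alpha\}$ with $0\neq\alpha\in S$.

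The key step, and the one I expect to be the main obstacle, is a non-collision statement for the images. Recalling from Lemma \ref{homomorphism} that $\pi(d^{\alpha,\beta})=E_{\beta-\alpha,\beta+\alpha}-E_{\beta+\alpha,\beta-\alpha}$, I would prove that the assignment $(\alpha,\beta)\mapsto\{\beta-\alpha,\beta+\alpha\}$ is injective on $S_{+}\times S$, even as a map into \emph{unordered} pairs. Indeed, since $S^{0}=\{0\}$ forces multiplication by $2$ to be injective on $S$, matching the two ordered entries forces $(\alpha,\beta)=(\alpha',\beta')$, whereas matching them with the entries swapped forces $\alpha'=-\alpha$, which cannot happen for two distinct representatives in $S_{+}$. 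Because each $\pi(d^{\alpha,\beta})$ is an off-diagonal difference $E_{p,q}-E_{q,p}$ with $p\neq q$ (as $2\alpha\neq0$), distinct basis vectors $d^{\alpha,\beta}$ thus have images supported on disjoint sets of matrix units; hence these images are linearly independent, $\pi$ is injective, and combined with surjectivity it is an isomorphism. This is precisely where the hypothesis $S^{0}=\{0\}$ is essential and where the abstract presentation of $\g_{S}$ must be pinned down to a concrete basis.

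Finally, for invariance of the form I would compute directly. Under $S^{0}=\{0\}$ the form on $\g_{S}$ is given by (\ref{bilinear-form-gs}), namely $\<d^{\alpha,\beta},d^{\mu,\nu}\>_{\chi}=(\delta_{\alpha,-\mu}-\delta_{\alpha,\mu})\delta_{\beta,\nu}$. Expanding $\<\pi(d^{\alpha,\beta}),\pi(d^{\mu,\nu})\>$ into the four matrix-unit pairings and applying the rule $\<E_{a,b},E_{c,d}\>=\tfrac12\delta_{a,d}\delta_{b,c}$ from (\ref{eBform}), each pair of Kronecker conditions collapses, again via injectivity of multiplication by $2$, to either ``$\beta=\nu$ and $\alpha=-\mu$'' (two terms, summing to $\delta_{\alpha,-\mu}\delta_{\beta,\nu}$) or ``$\beta=\nu$ and $\alpha=\mu$'' (two terms, summing to $-\delta_{\alpha,\mu}\delta_{\beta,\nu}$). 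The total is exactly the right-hand side of (\ref{bilinear-form-gs}), so $\pi$ preserves the form, completing the proof. This last computation is routine once the isomorphism is in hand.
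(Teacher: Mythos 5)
Your proposal is correct, and it shares the paper's overall strategy (surjectivity from Lemma \ref{homomorphism}, injectivity by matching explicit bases indexed by $\pm$-orbit representatives times $S$, and the same four-term matrix-unit expansion for the bilinear form), but it organizes the injectivity step in a genuinely different way. The paper never produces an a priori basis of $\g_{S}$: using Lemma \ref{lGGequality} with $S^{0}=\{0\}$ it gets that the $G_{\alpha,\beta}$ form a basis of $\mathcal{A}_{S}$, hence that $\{G^{\tau}_{\alpha,\beta}\ |\ \alpha\in S_{-},\ \beta\in S\}$ is a basis of $\mathcal{A}_{S}^{\tau}$, and then runs the argument \emph{backwards}: since the spanning set $\{d^{\alpha,\beta}\ |\ \alpha\in S_{-},\ \beta\in S\}$ of $\g_{S}$ is carried by the surjection $\pi$ onto a basis of the target, it must itself be linearly independent, so $\pi$ is injective. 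You instead pin down a basis of $\g_{S}$ \emph{first}, via Proposition \ref{Kalgebra}: decomposing $K=\oplus_{\beta\in S}K_{\beta}$ and identifying $J$ with the $+1$-eigenspace of the involution $F_{\alpha,\beta}\mapsto F_{-\alpha,\beta}$ (valid in characteristic $0$), so that $\{d^{\alpha,\beta}\ |\ \alpha\in S_{+},\ \beta\in S\}$ is an honest basis; you then push it forward and check independence of the images through the injectivity of $(\alpha,\beta)\mapsto\{\beta-\alpha,\beta+\alpha\}$ on $S_{+}\times S$ as unordered pairs and the resulting disjointness of matrix-unit supports — which is the same combinatorial content as the paper's Lemma \ref{lGGequality} plus the disjointness of $S_{-}$ and $-S_{-}$. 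Your route costs an extra step (the eigenspace argument) but buys a standalone description of $\g_{S}$, namely its dimension and an explicit basis, obtained independently of $\pi$; in the paper that basis emerges only as a by-product of the isomorphism. The verification that $\pi$ preserves the form is the identical computation in both, with the hypothesis $S^{0}=\{0\}$ entering through $\delta_{2\gamma,0}=\delta_{\gamma,0}$.
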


\begin{proof} For $\alpha,\beta,\mu,\nu\in S$, we have 
\begin{eqnarray}
\< G_{\alpha,\beta}^{\tau},G^{\tau}_{\mu,\nu}\>&=&\<E_{\alpha+\beta,\beta-\alpha}-E_{\beta-\alpha,\alpha+\beta},
E_{\mu+\nu,\nu-\mu}-E_{\nu-\mu,\mu+\nu}\>\nonumber\\
&=&\delta_{\alpha+\beta,\nu-\mu}\delta_{\beta-\alpha,\mu+\nu}
-\delta_{\alpha+\beta,\mu+\nu}\delta_{\beta-\alpha,\nu-\mu}\nonumber\\
&=&\delta_{2(\alpha+\mu),0}\delta_{\beta-\alpha,\mu+\nu}
-\delta_{2(\alpha-\mu),0}\delta_{\beta-\alpha,\nu-\mu}\nonumber\\
&=&\delta_{\alpha+\mu,0}\delta_{\beta-\alpha,\mu+\nu}
-\delta_{\alpha-\mu,0}\delta_{\beta-\alpha,\nu-\mu}\nonumber\\
&=&\left(\delta_{\alpha,-\mu}-\delta_{\alpha,\mu}\right)\delta_{\beta,\nu}.
\end{eqnarray}
It follows from this and (\ref{bilinear-form-gs}) that $\pi$ preserves the invariant bilinear form.
For the isomorphism assertion it remains to show that $\pi$ is injective.
As $S^{0}=\{0\}$, in view of Lemma \ref{lGGequality}, we have that
$G_{\alpha,\beta}=G_{\alpha',\beta'}$
if and only if $\alpha=\alpha'$ and $\beta=\beta'$ for $\alpha,\beta,\alpha',\beta'\in S$. Consequently,
$G_{\alpha,\beta}$  for $\alpha,\beta\in S$ form a basis of $\mathcal{A}_{S}$.
Let $S_-$ be a subset of $S$ such that $S=\{0\}\cup S_-\cup (-S_{-})$ is a disjoint decomposition.
As $G^{\tau}_{-\alpha,\beta}=-G^{\tau}_{\alpha,\beta}$ for $\alpha,\beta\in S$, $\mathcal{A}_{S}^{\tau}$ is linearly spanned by
$G_{\alpha,\beta}^{\tau}$  for $\alpha\in S_{-},\  \beta\in S$, which are linearly independent since $G_{\alpha,\beta}$  for $\alpha,\beta\in S$ are linearly independent.
Then $G_{\alpha,\beta}^{\tau}$  for $\alpha\in S_{-},\  \beta\in S$ form a basis of $\mathcal{A}_{S}^{\tau}$.
On the other hand, with (\ref{ed-symmetry}), we see that
$d^{\alpha,\beta}$ with $\alpha\in S_-,\ \beta\in S$ linearly span $\g_{S}$. As $\pi$ is a linear map from $\g_{S}$ onto $\mathcal{A}_{S}^{\tau}$ with $\pi (d^{\alpha,\beta})=-G_{\alpha,\beta}^{\tau}$ for $\alpha,\beta\in S$,
elements $d^{\alpha,\beta}$ for $\alpha\in S_-,\ \beta\in S$
must be linearly independent, so that $\{ d^{\alpha,\beta}\ |\ \alpha\in S_-,\ \beta\in S\}$
is a basis of $\g_{S}$. Consequently, $\pi$ is injective. This completes the proof.
\end{proof}

As a consequence of Proposition \ref{realization}, we have:

\bc{isomorphism}
Assume that $S$ is a finite  abelian group of order $2l+1$ with $l$ a positive integer.
Then ${\mathcal{A}}_{S}=\gl_{S}$ and $\g_{S}$ is isomorphic to $(\gl_{S})^{\tau}$
which is isomorphic to the simple Lie algebra of type $B_{l}$.
 \ec

\begin{proof}  As $|S|$ is odd, there exist integers $m$ and $n$ such that $1=2m+n|S|$.
Then $\alpha=2m\alpha$ for $\alpha\in S$. It follows from (\ref{eDef-G}) that for any $\alpha,\beta\in S$,
\begin{eqnarray}
E_{\alpha,\beta}=G_{m(\alpha-\beta),m(\alpha+\beta)}\in {\mathcal{A}}_{S}.
\end{eqnarray}
Thus ${\mathcal{A}}_{S}=\gl_{S}$ and hence ${\mathcal{A}}_{S}^{\tau}=(\gl_{S})^{\tau}$.
Again, since $|S|$ is odd, we have $S^{0}=\{0\}$.
By Proposition \ref{realization}, $\g_{S}\simeq {\mathcal{A}}_{S}^{\tau}=(\gl_{S})^{\tau}$.
It is well known that $(\gl_{S})^{\tau}$ is isomorphic to the finite-dimensional simple Lie algebra of type $B_{l}$.
This completes the proof.
\end{proof}

Furthermore, we have:

\bt{identification}
Assume that $S$ is a finite  abelian group of order $2l+1$ with $l$ a positive integer.
Then Lie algebra $D_{S}$ is isomorphic to the affine Kac-Moody algebra of type $B_{l}^{(1)}$.
\et

\begin{proof} Note that as $\chi: S\rightarrow \C^{\times}$ is assumed to be one-to-one,  $S$ must be cyclic.
Assume $S=\Z/(2l+1)\Z$ and let $\chi: S\rightarrow \C^{\times}$ be the group homomorphism defined by
$\chi(\alpha)=q^{\alpha}$ for $\alpha\in S$, where $q$ is a primitive $(2l+1)$-th root of unity.
From Theorem \ref{pisomorphism} and Corollary \ref{isomorphism},  we have
\begin{eqnarray}\label{3.18}
D_{S}\simeq \widehat{\g_S}/S,   \     \    \   \g_S\simeq  (\gl_{S})^{\tau}\simeq B_{l}.
\end{eqnarray}
Let $\sigma_1$ be the automorphism of the Lie algebra $(\gl_{S})^{\tau}\ (\simeq B_{l})$, given by
\begin{eqnarray}
\sigma_1(E_{\alpha,\beta}-E_{\beta,\alpha})=E_{\alpha+[1],\beta+[1]}-E_{\beta+[1],\alpha+[1]}
\end{eqnarray}
for $\alpha,\beta\in \Z/(2l+1)\Z=S$, where $[1]$ denotes the image of $1$ in $ \Z/(2l+1)\Z$.
With $S$ viewed as an automorphism group of
$(\gl_{S})^{\tau}$  (recalling (\ref{new})), we have $S=\< \sigma_1\>$.
Then, using (\ref{3.18}) we get
$$D_{S}\simeq \widehat{B_{l}}/S\simeq \widehat{B_{l}}^{S}= \widehat{B_{l}}^{\sigma_1},
$$
where $\widehat{B_{l}}^{S}$ (resp. $\widehat{B_{l}}^{\sigma_1}$) denotes the $S$-fixed
(resp. $\sigma_1$-fixed) points subalgebra. Note that for the isomorphism relation
$\widehat{B_{l}}/S\simeq \widehat{B_{l}}^{S}$, ${\bf k}$ corresponds to $(2l+1){\bf k}$.

Note that there is no nontrivial Dynkin diagram automorphism for a type $B_{l}$ simple Lie algebra.
From \cite{kac} (Proposition 8.1), $\sigma_1$ is conjugate to an inner automorphism
$\exp \left(\frac{2\pi i}{2l+1}\ad h\right)$, where $h$ is an element such that $\ad h$ is semi-simple
with only integer eigenvalues.
It follows that $\widehat{B_{l}}^{\sigma_1}$ is isomorphic to $\widehat{B_{l}}$ (see Appendix; cf. Section 8.5, \cite{kac}).
Consequently, $D_{S}$ is isomorphic to the affine Kac-Moody algebra of type $B_{l}^{(1)}$.
\end{proof}

Next, we consider the general case.
Recall ${\mathcal{A}}_{S}={\rm span}\{ E_{\alpha+\beta,\alpha-\beta}\ |\ \alpha,\beta\in S\}$,
which is an associative subalgebra  of $\gl_{S}$, and
$${\mathcal{A}}_{S}^{\tau}={\rm span}\{ G^{\tau}_{\alpha,\beta}
=E_{\beta+\alpha,\beta-\alpha}-E_{\beta-\alpha,\beta+\alpha}\ |\ \alpha,\beta\in S\},$$
a Lie subalgebra of ${\mathcal{A}}_{S}$. We also
recall $S^0=\{ \alpha\in S\ |\ 2\alpha=0\}$. The following is a generalization of Proposition \ref{realization}:

\begin{prop}\label{ideal-I}
Set
\begin{eqnarray}
I={\rm span}\{ d^{\alpha+\gamma,\beta+\gamma}-d^{\alpha,\beta}\ |\ \alpha,\beta\in S,\ \gamma \in S^0\}\subset \g_{S}.
\end{eqnarray}
Then $I$ is an ideal of $\g_{S}$.
Furthermore, the quotient Lie algebra $\g_{S}/I$ is isomorphic to ${\mathcal{A}}_{S}^{\tau}$.
\end{prop}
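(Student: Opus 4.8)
The plan is to identify $I$ with the kernel of the surjective Lie algebra homomorphism $\pi\colon\g_S\to\mathcal{A}_S^{\tau}$ of Lemma \ref{homomorphism}. Once the equality $I=\ker\pi$ is established at the level of subspaces, both assertions follow at once: the kernel of a Lie algebra homomorphism is automatically an ideal, and $\g_S/I=\g_S/\ker\pi\cong\mathrm{im}\,\pi=\mathcal{A}_S^{\tau}$. So the entire proof reduces to proving $I=\ker\pi$.

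The inclusion $I\subseteq\ker\pi$ is immediate: for $\alpha,\beta\in S$ and $\gamma\in S^0$, relation (\ref{esymmetry-Gtau}) gives $\pi(d^{\alpha+\gamma,\beta+\gamma}-d^{\alpha,\beta})=-G^{\tau}_{\alpha+\gamma,\beta+\gamma}+G^{\tau}_{\alpha,\beta}=0$. Hence $\pi$ descends to a surjective linear map $\bar\pi\colon\g_S/I\to\mathcal{A}_S^{\tau}$, and it remains only to show $\bar\pi$ is injective, for then $\ker\pi\subseteq I$ and the equality holds. I would argue this exactly as in Proposition \ref{realization}, now incorporating the diagonal $S^0$-shift. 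First note that $d^{\alpha,\beta}=0$ in $\g_S$ whenever $\alpha\in S^0$ (then $-\alpha=\alpha$ forces $d^{\alpha,\beta}=-d^{\alpha,\beta}$), matching $G^{\tau}_{\alpha,\beta}=0$ for $\alpha\in S^0$. Next I would pin down a basis of the target: since $\mathcal{A}_S^{\tau}$ is the $\tau$-fixed subspace of $\mathcal{A}_S$, whose basis is $\{E_{\mu,\nu}\mid\mu+\nu\in 2S\}$ by (\ref{eAS2S}), and $\tau(E_{\mu,\nu})=-E_{\nu,\mu}$, a basis of $\mathcal{A}_S^{\tau}$ is $\{E_{\mu,\nu}-E_{\nu,\mu}\}$ indexed by unordered pairs $\{\mu,\nu\}$ with $\mu\neq\nu$ and $\mu+\nu\in 2S$. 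Under $G^{\tau}_{\alpha,\beta}=E_{\alpha+\beta,\beta-\alpha}-E_{\beta-\alpha,\beta+\alpha}$ this pair is $\{\mu,\nu\}=\{\alpha+\beta,\beta-\alpha\}$, and $\mu\neq\nu\iff\alpha\notin S^0$.

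The crux is a bijection between this index set and a set $P$ of representatives of the pairs $(\alpha,\beta)$ with $\alpha\notin S^0$, taken modulo the two relations $d^{-\alpha,\beta}=-d^{\alpha,\beta}$ and the defining relation $d^{\alpha+\gamma,\beta+\gamma}\equiv d^{\alpha,\beta}$ $(\gamma\in S^0)$ of $I$. Given $\{\mu,\nu\}$ with $\mu+\nu\in 2S$, both $\mu+\nu$ and $\mu-\nu=(\mu+\nu)-2\nu$ lie in $2S$, so one may solve $\alpha+\beta=\mu$, $\beta-\alpha=\nu$ for $(\alpha,\beta)$, uniquely up to a common $S^0$-shift, while a swap $\mu\leftrightarrow\nu$ replaces $\alpha$ by $-\alpha$. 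Thus $(\alpha,\beta)\mapsto\{\alpha+\beta,\beta-\alpha\}$ induces a bijection from $P$ onto the basis index set, so $\bar\pi$ carries the spanning family $\{\bar d^{\alpha,\beta}\mid(\alpha,\beta)\in P\}$ of $\g_S/I$ bijectively (up to sign) onto the basis $\{E_{\mu,\nu}-E_{\nu,\mu}\}$ of $\mathcal{A}_S^{\tau}$. Linear independence of the images forces linear independence of the $\bar d^{\alpha,\beta}$, so $\bar\pi$ is an isomorphism, giving $I=\ker\pi$ and completing the proof.

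The main obstacle is precisely this bookkeeping: verifying that the $S^0$-shift ambiguity in solving for $(\alpha,\beta)$ is \emph{exactly} the relation defining $I$, so that passing to $\g_S/I$ removes all redundancy without collapsing anything further. The enabling facts are the closure property $\mu+\nu\in 2S\Rightarrow\mu-\nu\in 2S$ and that division by $2$ is well defined modulo $S^0$; these are what make the representative bijection of the general case go through, generalizing the injectivity step of Proposition \ref{realization} (which is the special case $S^0=\{0\}$, where no diagonal shift occurs).
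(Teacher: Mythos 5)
Your proof is correct, and although its overall shape matches the paper's (both pass to the induced map $\bar\pi\colon \g_{S}/I\rightarrow {\mathcal{A}}_{S}^{\tau}$ and prove injectivity by matching a spanning family of $\g_{S}/I$ against a linearly independent family in ${\mathcal{A}}_{S}^{\tau}$), your execution of the crucial combinatorial step is genuinely different --- and it actually repairs a gap in the paper's own argument. The paper checks the ideal property directly from (\ref{eq:2.24}), then chooses coset representatives $T$ of $S$ modulo $S^{0}$ and representatives $T_{-}\subset T\setminus\{0\}$ modulo $\alpha\mapsto -\alpha$, and asserts that $\{G^{\tau}_{\alpha,\beta}\mid(\alpha,\beta)\in T_{-}\times S\}$ is a basis of ${\mathcal{A}}_{S}^{\tau}$. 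That assertion can fail, because the sign relation and the diagonal $S^{0}$-shift interact: whenever $2\alpha\in S^{0}$ with $2\alpha\neq 0$ (i.e.\ $\alpha$ has order $4$), relations (\ref{esymmetry-Gtau}) and (\ref{esymmetry-G}) combine to give
\[
G^{\tau}_{\alpha,\beta+2\alpha}\;=\;G^{\tau}_{-\alpha+2\alpha,\,\beta+2\alpha}\;=\;G^{\tau}_{-\alpha,\beta}\;=\;-G^{\tau}_{\alpha,\beta},
\]
a dependence between two distinct indices in $T_{-}\times S$. Concretely, for $S=\Z/4\Z$ one has $S^{0}=\{0,2\}$, $T_{-}=\{1\}$, and $G^{\tau}_{1,\beta+2}=-G^{\tau}_{1,\beta}$, so the paper's claimed basis consists of four vectors spanning the two-dimensional space ${\mathcal{A}}_{S}^{\tau}$; the same collapse occurs for the claimed basis $\{d^{\alpha,\beta}+I\}$ of $\g_{S}/I$. (The proposition itself remains true; the paper's argument is sound only when no element of order $4$ exists, e.g.\ under the hypothesis $S^{0}=\{0\}$ of Proposition \ref{realization}.) Your proof avoids exactly this pitfall: you quotient the index set $\{(\alpha,\beta)\mid\alpha\notin S^{0}\}$ by the group generated by \emph{both} moves $(\alpha,\beta)\mapsto(-\alpha,\beta)$ and $(\alpha,\beta)\mapsto(\alpha+\gamma,\beta+\gamma)$, and your bijection of orbits with unordered pairs $\{\mu,\nu\}$, $\mu\neq\nu$, $\mu+\nu\in 2S$ --- which index a genuine basis $\{E_{\mu,\nu}-E_{\nu,\mu}\}$ of the fixed-point space --- correctly absorbs the order-$4$ degeneracy, after which linear independence of the images forces injectivity. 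Two smaller differences also favor your write-up: you obtain the ideal property for free from $I=\ker\pi$ rather than by direct computation, and the observation that $d^{\alpha,\beta}=0$ for $\alpha\in S^{0}$ keeps the spanning statement clean.
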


\begin{proof}
From (\ref{eq:2.24}), it is straightforward to show that $I$ is an ideal.
On the other hand,  by Lemma \ref{lGGequality}, for $\alpha,\beta\in S,\ \gamma \in S^0$, we have
$$G_{\alpha+\gamma,\beta+\gamma}^{\tau}=G_{\alpha+\gamma,\beta+\gamma}-G_{-\alpha-\gamma,\beta+\gamma}
=G_{\alpha,\beta}-G_{-\alpha,\beta}
=G_{\alpha,\beta}^{\tau}$$
in ${\mathcal{A}}_{S}^{\tau}$. Then the Lie algebra homomorphism $\pi$ from $\g_{S}$ onto ${\mathcal{A}}_{S}^{\tau}$
 reduces to a Lie algebra homomorphism $\bar{\pi}$ from $\g_{S}/I$ onto ${\mathcal{A}}_{S}^{\tau}$ with
$\bar{\pi}(d^{\alpha,\beta}+I)=-G_{\alpha,\beta}^{\tau}$ for $\alpha,\beta\in S$.
It suffices to prove that $\bar{\pi}$ is injective.
Let $T$ be a complete set of equivalence class representatives of $S$ modulo $S^0$ with $0\in T$.
It follows from Lemma \ref{lGGequality} that $\{G_{\alpha,\beta}\ | (\alpha,\beta)\in T\times  S\}$ is a base for ${\mathcal{A}}_{S}$.
Furthermore, consider the equivalence relation $\sim$ on $S$ defined by $\alpha\sim \beta$ if and only if $\beta=\pm \alpha$.
Let $T_{-}$ be a complete set of equivalence class representatives in $T\setminus \{0\}$ with respect to this particular
equivalence relation. Noticing that if $\alpha\in T_{-}$, then $-\alpha\notin T_{-}$, we see
that the linear independence of $\{G_{\alpha,\beta}\ | (\alpha,\beta)\in T\times  S\}$
implies that $G_{\alpha,\beta}^{\tau}$ with $\alpha\in T_{-},\ \beta\in S$ are linearly independent. Then
 $\{G_{\alpha,\beta}^{\tau}\ |\  (\alpha,\beta)\in T_{-}\times S\}$ is a base for ${\mathcal{A}}_{S}^{\tau}$.
 On the other hand, from the defining relations of $\g_{S}$ and the definition of $I$,
 we see that $\g_{S}/I$ is linearly spanned by $d^{\alpha,\beta}+I$ with $(\alpha,\beta)\in T_{-}\times S$.
It follows that $\{d^{\alpha,\beta}+I\ |\  (\alpha,\beta)\in T_{-}\times S\}$ is a base for $\g_{S}/I$.
Consequently, $\bar{\pi}$ is a Lie algebra isomorphism.
\end{proof}

Recall the subgroup $2S$ of $S$. Let
\begin{eqnarray}
S=S_{0}\cup S_1\cup\cdots \cup S_{r-1}
\end{eqnarray}
be the decomposition of $S$ into distinct cosets of $2S$ with $S_0=2S$.
 We have $2S\simeq S/S^0$, which implies $|2S|\cdot |S^0|=|S|$.
Thus $r=|S^0|$.
 Notice that for $\alpha,\beta\in S$, if $\alpha+\beta\in S_{j}$ for $0\le j<r$, we have
 $\beta-\alpha=(\alpha+\beta)-2\alpha\in S_j$, so that
 $$G^{\tau}_{\alpha,\beta}=E_{\beta+\alpha,\beta-\alpha}-E_{\beta-\alpha,\beta+\alpha}\in \gl_{S_{j}}.$$
 Thus
\begin{eqnarray}
{\mathcal{A}}_{S}^{\tau}\subset
\gl_{S_0}\oplus \gl_{S_1}\oplus \cdots \oplus \gl_{S_{r-1}}\subset \gl_{S}.
\end{eqnarray}
For $0\le j\le r-1$, set
$$\g_j={\mathcal{A}}_{S}^{\tau}\cap \gl_{S_j},$$
a Lie subalgebra of ${\mathcal{A}}_{S}^{\tau}$. Then
\begin{eqnarray}
{\mathcal{A}}_{S}^{\tau}=\g_{0}\oplus \g_{1}\oplus \cdots \oplus \g_{r-1},
\end{eqnarray}
a direct sum of Lie algebras.

Furthermore, we have:

\begin{lem}\label{lgj-exp}
For each $0\le j<r$, we have
\begin{eqnarray}\label{gj-expression}
\g_{j}={\rm span}\{ E_{\mu,\nu}-E_{\nu,\mu}\ | \  \mu,\nu\in S_j\}
\end{eqnarray}
and $\dim \g_j=\frac{1}{2}k(k-1)$ where $k=|2S|$.
\end{lem}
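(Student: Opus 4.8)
The plan is to identify each $\g_{j}$ with the $\tau$-fixed points of the full matrix algebra $\gl_{S_{j}}$ on the index set $S_{j}$, and then to recognize the latter as the space of skew-symmetric matrices. The two assertions of the lemma then become the explicit description of these fixed points and a count of unordered pairs in $S_{j}$.

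First I would verify the inclusion $\gl_{S_{j}}\subseteq \mathcal{A}_{S}$. Writing the coset as $S_{j}=s_{j}+2S$, for any $\mu,\nu\in S_{j}$ we have $\mu+\nu\in 2s_{j}+2S=2S$, so by the basis description (\ref{eAS2S}) each $E_{\mu,\nu}$ with $\mu,\nu\in S_{j}$ is a basis vector of $\mathcal{A}_{S}$; hence $\gl_{S_{j}}\subseteq \mathcal{A}_{S}$. (In fact, since $S/2S$ has exponent two, one checks that $\mu+\nu\in 2S$ together with $\mu\in S_{j}$ forces $\nu\in S_{j}$, so the basis (\ref{eAS2S}) of $\mathcal{A}_{S}$ partitions as the disjoint union over $j$ of the bases of the $\gl_{S_{j}}$.)

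Next, since $\tau(E_{\mu,\nu})=-E_{\nu,\mu}$ by (\ref{etau-def}) and both indices stay in $S_{j}$, the automorphism $\tau$ preserves $\gl_{S_{j}}$. Combined with $\gl_{S_{j}}\subseteq \mathcal{A}_{S}$ this gives
$$\g_{j}=\mathcal{A}_{S}^{\tau}\cap \gl_{S_{j}}=(\gl_{S_{j}})^{\tau}.$$
Expanding a general element $a=\sum_{\mu,\nu\in S_{j}}c_{\mu,\nu}E_{\mu,\nu}$, the condition $\tau(a)=a$ is exactly $c_{\nu,\mu}=-c_{\mu,\nu}$, i.e. the coefficient matrix is skew-symmetric (so in particular the diagonal coefficients vanish). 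Therefore $(\gl_{S_{j}})^{\tau}=\span\{E_{\mu,\nu}-E_{\nu,\mu}\mid \mu,\nu\in S_{j}\}$, which is (\ref{gj-expression}).

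For the dimension, the vectors $E_{\mu,\nu}-E_{\nu,\mu}$ indexed by unordered pairs $\{\mu,\nu\}\subseteq S_{j}$ with $\mu\neq\nu$ span $\g_{j}$ and are linearly independent, being distinct $\pm$-combinations of the basis $\{E_{\mu,\nu}\}$ of $\gl_{S}$. All cosets of $2S$ share the cardinality $k=|2S|$, so $|S_{j}|=k$ and the number of such pairs is $\binom{k}{2}=\frac{1}{2}k(k-1)$, giving $\dim\g_{j}=\frac{1}{2}k(k-1)$. I expect the only step requiring care to be the inclusion $\gl_{S_{j}}\subseteq\mathcal{A}_{S}$, which rests on the exponent-two property of $S/2S$ guaranteeing $\mu+\nu\in 2S$ whenever $\mu,\nu$ lie in a common coset.
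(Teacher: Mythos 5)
Your proof is correct, and it is organized differently from the paper's. The paper argues at the level of the spanning vectors $G^{\tau}_{\alpha,\beta}$: for the inclusion $\subseteq$ in (\ref{gj-expression}) it notes that $\alpha+\beta\in S_{j}$ forces $\beta-\alpha=(\alpha+\beta)-2\alpha\in S_{j}$, and for $\supseteq$ it produces an explicit preimage, writing $\mu=\alpha_{j}+2\alpha$, $\nu=\alpha_{j}+2\beta$ and checking that $E_{\mu,\nu}-E_{\nu,\mu}=G^{\tau}_{\alpha-\beta,\alpha+\beta+\alpha_{j}}$. You instead isolate the structural fact $\gl_{S_{j}}\subseteq \mathcal{A}_{S}$ (indeed your parenthetical remark gives $\mathcal{A}_{S}=\gl_{S_{0}}\oplus\cdots\oplus\gl_{S_{r-1}}$, since $S/2S$ has exponent two), from which $\g_{j}=\mathcal{A}_{S}^{\tau}\cap\gl_{S_{j}}=(\gl_{S_{j}})^{\tau}$ is immediate because $\mathcal{A}_{S}^{\tau}$ is by definition the set of $\tau$-fixed points of $\mathcal{A}_{S}$; then (\ref{gj-expression}) and the dimension count are just the standard description of the fixed points of $X\mapsto -X^{t}$ as skew-symmetric matrices, together with the count of unordered pairs in a coset of size $k$. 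Both routes rest on the same coset computation ($\mu,\nu\in S_{j}$ implies $\mu+\nu\in 2S$, which is exactly what the basis description (\ref{eAS2S}) requires), so the difference is one of packaging, but yours buys something real: identifying $\g_{j}$ outright as the full algebra of skew-symmetric matrices on the index set $S_{j}$ makes the discussion following the lemma in the paper (the action on $\C[S_{j}]$ together with a dimension count, used there to show $\rho(\g_{j})$ exhausts the skew-symmetric operators) essentially automatic. What the paper's version buys in exchange is an explicit formula expressing each $E_{\mu,\nu}-E_{\nu,\mu}$ in terms of the generators $G^{\tau}_{\alpha,\beta}$, i.e., in terms of the images $\pi(d^{\alpha,\beta})$, which is the form convenient for tracing elements of $\g_{S}/I$ in Propositions \ref{ideal-I} and \ref{general-case}.
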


\begin{proof}  From definition we have
\begin{eqnarray*}
\g_j={\rm span}\{
E_{\beta+\alpha,\beta-\alpha}-E_{\beta-\alpha,\beta+\alpha}\ |\ \alpha,\beta\in S\  \mbox{ with }\alpha+\beta\in S_j\},
\end{eqnarray*}
noticing that $\alpha+\beta\in S_j$ implies $\beta-\alpha=(\alpha+\beta)-2\alpha\in S_j$. It follows that
$\g_{j}\subset {\rm span}\{ E_{\mu,\nu}-E_{\nu,\mu}\ | \  \mu,\nu\in S_j\}$.
On the other hand, let $\mu,\nu\in S_{j}$. Fix an element  $\alpha_j\in S_j$. Then
$\mu=\alpha_j+2\alpha$ and $\nu=\alpha_j+2\beta$
for some $\alpha,\beta\in S$. Thus
$$E_{\mu,\nu}-E_{\nu,\mu}=G_{\alpha-\beta,\alpha+\beta+\alpha_j}^{\tau}\in \g_j.$$
This proves that $\g_{j}\supset {\rm span}\{ E_{\mu,\nu}-E_{\nu,\mu}\ | \  \mu,\nu\in S_j\}$. Hence (\ref{gj-expression}) holds.
From this the second assertion of the lemma follows immediately.
\end{proof}

Note that $\gl_{S}$ naturally acts on the group algebra $\C[S]$ of $S$.
Equip $\C[S]$ with a nondegenerate symmetric bilinear form
$\<\cdot,\cdot\>$ defined by $\<e^{\alpha},e^{\beta}\>=\delta_{\alpha,\beta}$ for $\alpha,\beta\in S$.
As subalgebras of $\gl_{S}$,
${\mathcal{A}}_{S}$ and ${\mathcal{A}}_{S}^{\tau}$ also act on $\C[S]$.
We have the following ${\mathcal{A}}_{S}^{\tau}$-module decomposition
$$\C[S]=\C[S_0]\oplus \C[S_1]\oplus \cdots \oplus \C[S_{r-1}],$$
where $\g_{j}$ acts on $\C[S_j]$ for $0\le j<r$. Denoting by $\rho$ the algebra homomorphism of $\gl_{S}$ to
$\gl(\C[S])$, we have
$$\rho(\g_j)\subset \{ A\in \gl(\C[S_j])\ | \  (Au,v)=-(u,Av)\  \  \mbox{ for }u,v\in \C[S_j]\}.$$
Since $|S_j|=k$ and $\dim \g_j=\frac{1}{2}k(k-1)$ by Lemma \ref{lgj-exp}, we see that actually the equality holds.
Therefore,  $\g_j$ is a simple Lie algebra of type either $B$ or $D$, depending on
that $k$ ($=|2S|$) is odd or even.  Consequently, ${\mathcal{A}}_{S}^{\tau}$ is semi-simple.

To summarize, we have:

\begin{prop}\label{general-case}
Let $S$ be a finite  abelian group. Set $k=|2S|$ and $r=|S/2S|$.
Then $\g_{S}/I$ is isomorphic to the direct sum of $r$ copies of simple Lie algebras
of type $B_{\frac{1}{2}(k-1)}$ if $k$ is odd and of type $D_{\frac{1}{2}k}$ if $k$ is even,
where $I$ is the ideal of $\g_{S}$, which was obtained in Proposition \ref{ideal-I}.
\end{prop}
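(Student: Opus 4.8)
The plan is to reduce everything to the structure of $\mathcal{A}_{S}^{\tau}$, which has already been laid out, and then read off the answer. By Proposition \ref{ideal-I} the quotient $\g_{S}/I$ is isomorphic to $\mathcal{A}_{S}^{\tau}$, so it suffices to decompose $\mathcal{A}_{S}^{\tau}$. I would use the coset decomposition $S=S_{0}\cup\cdots\cup S_{r-1}$ of $S$ modulo $2S$, where $r=|S/2S|$ and each coset has the common cardinality $k=|2S|$, together with the Lie algebra direct sum $\mathcal{A}_{S}^{\tau}=\g_{0}\oplus\cdots\oplus\g_{r-1}$ with $\g_{j}=\mathcal{A}_{S}^{\tau}\cap\gl_{S_{j}}$ established above. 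Thus the problem splits into identifying a single summand $\g_{j}$ and then counting.

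The crux is to recognize each $\g_{j}$ as an orthogonal Lie algebra. By Lemma \ref{lgj-exp} we have $\g_{j}=\mathrm{span}\{E_{\mu,\nu}-E_{\nu,\mu}\mid \mu,\nu\in S_{j}\}$ with $\dim\g_{j}=\frac{1}{2}k(k-1)$. Letting $\gl_{S}$ act on $\C[S]$ and restricting to the invariant subspace $\C[S_{j}]\cong\C^{k}$ equipped with the nondegenerate symmetric form $\langle e^{\alpha},e^{\beta}\rangle=\delta_{\alpha,\beta}$, every generator $E_{\mu,\nu}-E_{\nu,\mu}$ is skew-symmetric for this form, so $\g_{j}$ maps into the orthogonal Lie algebra $\mathfrak{so}(\C[S_{j}])$. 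Since this action is faithful (as $\gl_{S}\cong M(|S|,\C)$ acts faithfully on $\C[S]$) and $\dim\mathfrak{so}(\C[S_{j}])=\frac{1}{2}k(k-1)$ matches $\dim\g_{j}$, the inclusion is forced to be an equality, whence $\g_{j}\cong\mathfrak{so}_{k}(\C)$. By the standard classification this is simple of type $B_{\frac{1}{2}(k-1)}$ when $k$ is odd and of type $D_{\frac{1}{2}k}$ when $k$ is even.

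Finally I would assemble the pieces: because all $r$ cosets share the same size $k$, the summands $\g_{0},\dots,\g_{r-1}$ are mutually isomorphic copies of this one simple orthogonal Lie algebra, and transporting $\mathcal{A}_{S}^{\tau}=\bigoplus_{j}\g_{j}$ through the isomorphism of Proposition \ref{ideal-I} yields the asserted description of $\g_{S}/I$ as a direct sum of $r$ copies. The only real obstacle is the equality step: one must be sure that matching dimensions together with faithfulness genuinely pins $\g_{j}$ down to all of $\mathfrak{so}(\C[S_{j}])$, and one should flag the small-$k$ degeneracies of the $B$/$D$ classification (for instance $k\le 2$, where $\mathfrak{so}_{k}$ is not simple, and the low-rank coincidences $B_{1},D_{2},D_{3}$) so that the statement is understood in the stable range of $k$. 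Apart from this, the proof is a direct transcription of Lemma \ref{lgj-exp} and the discussion preceding the proposition.
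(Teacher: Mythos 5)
Your proposal is correct and follows essentially the same route as the paper: the paper's own proof of Proposition \ref{general-case} is precisely the discussion you transcribe, namely Proposition \ref{ideal-I}, the coset decomposition $\mathcal{A}_{S}^{\tau}=\g_{0}\oplus\cdots\oplus\g_{r-1}$, Lemma \ref{lgj-exp}, and the skew-symmetry-plus-dimension-count identification $\g_{j}\cong\mathfrak{so}_{k}(\C)$. Your explicit appeal to faithfulness of the action on $\C[S_{j}]$ (which the paper leaves implicit in the phrase ``the equality holds'') and your caveat about small-$k$ degeneracies (e.g.\ $\mathfrak{so}_{2}$ and $\mathfrak{so}_{4}$ are not simple) are minor but legitimate sharpenings that apply equally to the paper's statement.
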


\section*{Appendix}
Here, we provide a proof of a statement at the end of the proof of Theorem \ref{identification}.

\bl{appendix}
 Suppose that $\g$ is a Lie algebra with a symmetric invariant bilinear form $\<\cdot,\cdot\>$ and
$h$ is an element of $\g$ such that $\ad h$ is semi-simple with only integer eigenvalues.
Set $\sigma=\exp (\frac{2\pi i}{T}\ad h)$, where $T$ is a fixed positive integer.
Then $\hat{\g}^{\sigma}\simeq \hat{\g}$
with ${\bf k}$ corresponding to $\frac{1}{T}{\bf k}$.
\el

\begin{proof}  Recall the affine Lie algebra $\hat{\g}=\g\otimes \C[t,t^{-1}]\oplus \C {\bf k}$, where
$$[a\otimes t^{m},b\otimes t^{n}]=[a,b]\otimes t^{m+n}+m\delta_{m+n,0}\<a,b\>{\bf k}$$
for $a,b\in \g,\ m,n\in \Z$. On the other hand, we have  Lie algebra
$$\widehat{L}(\g,T)=\g\otimes \C[t^{\frac{1}{T}},t^{-\frac{1}{T}}]\oplus \C {\bf k},$$
where
$$[a\otimes t^{\frac{m}{T}},b\otimes t^{\frac{n}{T}}]=[a,b]\otimes t^{\frac{m+n}{T}}+\frac{m}{T}\delta_{m+n,0}\<a,b\>{\bf k}$$
for $a,b\in \g,\ m,n\in \Z$.
It is straightforward to see that  $\hat{\g}$ is canonically isomorphic to $\widehat{L}({\g},T)$
with ${\bf k}$ corresponding to $\frac{1}{T}{\bf k}$.
 View $\sigma$ as an automorphism of $\widehat{L}({\g},T)$ by
 $$\sigma(a\otimes t^{\frac{n}{T}})=\exp\left(\frac{2n\pi i}{T}\right)\sigma(a)\otimes t^{\frac{n}{T}}\   \   \
\mbox{ for }a\in \g,\ n\in \Z,$$ and then set
$$\hat{\g}[\sigma]=\widehat{L}(\g,T)^{\sigma}.$$
Then $\hat{\g}^{\sigma}$ is canonically isomorphic to $\hat{\g}[\sigma]$
with ${\bf k}$ corresponding to $\frac{1}{T}{\bf k}$.
Now, it suffices to prove that $\hat{\g}[\sigma]$ is isomorphic to $\hat{\g}$ with
${\bf k}$ corresponding to ${\bf k}$.
For $r\in \Z$, set $\g_{r}=\{ u\in \g\ |\ [h,u]=ru\}$. Then $\g=\oplus_{r\in \Z}\g_{r}$ and
$\<\g_r,\g_s\>=0$ whenever $r+s\ne 0$.
For $a\in \g_{r}$ with $r\in \Z$, set
$$a^{\sigma}(x)=\sum_{n\in \Z}(a\otimes t^{n-\frac{r}{T}})x^{-n-1+\frac{r}{T}}\in x^{\frac{r}{T}}\hat{\g}[\sigma][[x,x^{-1}]].$$
 It was esentially proved in  \cite{Li-twisted-local} (Remark 5.5) that
the linear map $\psi: \hat{\g}[\sigma]\rightarrow \hat{\g}$ defined by  $\psi({\bf k})={\bf k}$ and
$$\psi( a^{\sigma}(x))=x^{\frac{r}{T}}\left(a(x)+\frac{1}{T}\<h,a\>{\bf k}x^{-1}\right)$$
for $a\in \g_{r}$ with $r\in \Z$, is a Lie algebra isomorphism.
In terms of components, we have
 $$\psi( a\otimes t^{n-\frac{r}{T}})=a\otimes t^{n}+\frac{1}{T}\<h,a\> \delta_{n,0}{\bf k},$$
 noticing that $\<h,a\>=\delta_{r,0}\<h,a\>$ as $h\in \g_0$.
 On the other hand, it is straightforward to show that
$\psi$ is indeed a Lie algebra isomorphism.
\end{proof}

\end{document}